\numberwithin{equation}{section}
\theoremstyle{plain}
\newtheorem{thm}[equation]{Theorem}
\newtheorem{lem}[equation]{Lemma}
\newtheorem{prop}[equation]{Proposition}
\newtheorem{exa}[equation]{Example}
\newtheorem{rem}[equation]{Remark}
\theoremstyle{definition}
\theoremstyle{remark}
\providecommand{\R}[1]{\mathrm{#1}}
\DeclareMathOperator{\Gal}{Gal}
\DeclareMathOperator{\SL}{SL}
\DeclareMathOperator{\SO}{SO}
\DeclareMathOperator{\Spec}{Spec}
\DeclareMathOperator{\Spin}{Spin}
\def\R{{\mathbb R}}
\def\Gal{{\rm Gal}}
\newcommand{\ep}{\varepsilon}
\newcommand{\rmH}{\mathrm{H}}
\newcommand{\rmK}{\mathrm{K}}
\newcommand{\rmU}{\mathrm{U}}
\newcommand{\bbQ}{\mathbb{Q}}
\newcommand{\bbR}{\mathbb{R}}
\newcommand{\bbC}{\mathbb{C}}
\DeclareMathOperator{\an}{an}
\DeclareMathOperator{\diag}{diag}
\DeclareMathOperator{\Nm}{Nm}
\DeclareMathOperator{\spl}{spl}
\DeclareMathOperator{\st}{st}
\DeclareMathOperator{\SU}{SU}
\newcommand{\norm}[1]{\left\lVert#1\right\rVert}
\DeclareFontFamily{U}{wncy}{}
\DeclareFontShape{U}{wncy}{m}{n}{%
<5>wncyr5%
<6>wncyr6%
<7>wncyr7%
<8>wncyr8%
<9>wncyr9%
<10>wncyr10%
<11>wncyr10%
<12>wncyr6%
<14>wncyr7%
<17>wncyr8%
<20>wncyr10%
<25>wncyr10}{}
\DeclareMathAlphabet{\cyr}{U}{wncy}{m}{n}
\begin{document}

\title[Counting integral points on indefinite ternary quadratic equations]
{Counting integral points on indefinite ternary quadratic equations over number fields}

\author{Fei Xu}
\address{Fei Xu \newline School of Mathematical Sciences, \newline Capital Normal University,
\newline 105 Xisanhuanbeilu, \newline 100048 Beijing, China}

\email{xufei@math.ac.cn}

\author{Runlin Zhang}

\address{Runlin Zhang \newline Beijing International Center for Mathematical Research, \newline Peking University,
\newline 1000871 Beijing, China}

\email{zhangrunlinmath@outlook.com}

\date{\today.}



\maketitle

\section*{\it Abstract}

We study an asymptotic formula for counting integral points over an equation defined by a non-degenerated indefinite integral ternary quadratic form $f$ representing a non-zero integer $a$ such that $-a\cdot det(f)$ is square over a number field. In particular, we prove that the finite part of this asymptotic formula is given by the product of local density times $1-p^{-1}$ over all finite primes $p$ over $\Bbb Z$.

\section{Introduction}

Let $f(x_1, \cdots, x_n)$ be a non-singular indefinite quadratic form over $\Bbb Z$ and $a$ be a non-zero integer. It is a classical question to count the integral solutions 
$$ N(f, a, T)= \# \{(\alpha_1, \cdots, \alpha_n)\in \Bbb Z^n: \ f(\alpha_1, \cdots, \alpha_n)=a  \ \text{with} \ \norm{(\alpha_1, \cdots, \alpha_n)} \leq T \} $$ as $T\to \infty$, where $\norm{ \cdot } $ is a norm on $\Bbb R^n$. 
For example, when $n=2$, it is well-known that 
$$ N(f, a, T) \sim \begin{cases}  O(1) \ \ \ & \text{when $-\det(f) \in (\Bbb Q^\times)^2$} \\
c \cdot log T & \text{otherwise} \end{cases}  $$
as $T\to \infty$ by the Dirichlet's  unit theorem (see \cite[Chapter 5, Theorem 5.12]{PR94}). In \cite{DRS}, Duke, Rudnick and Sarnak proved that 
$$ N(f, a, T) \sim c \cdot T^{n-2} $$ as $T \to \infty$ when $n\geq 4$. Eskin and McMullen provided a much simpler proof in \cite{EM}.  This result has been largely generalized by Eskin, Mozes and Shah  in \cite{EMS}. For $n=3$, Duke, Rudnick and Sarnak in \cite{DRS} also pointed out that 
$$ N(f, a, T) \sim \begin{cases}  c \cdot T \ \ \ & \text{$-a\det(f) \not\in (\Bbb Q^\times)^2$} \\
c \cdot T log T \ \ \ & \text{$-a\det(f) \in (\Bbb Q^\times)^2$} \end{cases}  $$
as $T \to \infty$ for some specific quadratic forms $f$. Such general phenomenon was proved by Oh and Shah in \cite{OS}.

In spirit of the local-global principle, one expects that the above constant $c$ should be the contribution of numbers of local solutions over all finite primes. Indeed, Borovoi and Rudnick in \cite{BR95} first proved that $c$ is the product of numbers of local solutions over all finite primes for $n\geq 4$ and studied the case of $n=3$ and $-a\det(f) \not\in (\Bbb Q^\times)^2$ by introducing the density function.  By applying strong approximation with Brauer-Manin obstruction, Wei and the first author showed that $c$ is an average of products of number of twisted local solutions by the Brauer elements over all finite primes in \cite{WX} (see also \cite{CX}) for the later situation. 
Since Oh and Shah in \cite{OS} provided a different type asymptotic formula for $n=3$ and $-a\det(f) \in (\Bbb Q^\times)^2$, it is natural to ask how the numbers of local solutions over finite primes contribute to the constant $c$ in the asymptotic formula. In this paper, we will answer this question (see Theorem \ref{main}). 

\begin{thm} \label{intro-m} When $n=3$ and $-a\det(f) \in (\Bbb Q^\times)^2$, then 
$$ N(f, a, T) \sim (\prod_{ p \ \text{primes}}(1-p^{-1}) \alpha_p(f,a) )\cdot  (log T \int_{B_T} \omega) $$ as $T\to \infty$ where 
$$\alpha_p(f,a)= \lim_{k\to \infty} \frac{\#\{ (\beta_1, \beta_2, \beta_3)\in (\Bbb Z/(p^k))^3: \ f(\beta_1, \beta_2, \beta_3) \equiv a \mod p^k\}}{p^{2k}}$$ for all primes $p$ and 
$$ B_T= \{ (\alpha_1, \alpha_2, \alpha_3) \in \Bbb R^3: \ f(\alpha_1, \alpha_2, \alpha_3)= a \ \text{with} \ \norm{ (\alpha_1, \alpha_2, \alpha_3)} \leq T \} $$ and $\omega$ is the normalized gauge form related to the equation.  
\end{thm}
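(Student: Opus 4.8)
The plan is to combine an equidistribution (homogeneous dynamics) input, which produces the shape $N(f,a,T)\sim c\,\log T\int_{B_T}\omega$, with an adelic volume computation that pins down the constant $c=\prod_p(1-p^{-1})\alpha_p(f,a)$. The geometric set-up is standard: the affine quadric $V=\{f=a\}$ is a homogeneous space $V\cong\bfG/\bfH$, where $\bfG=\Spin(f)$ (simply connected, with $\bfG(\mathbb{R})$ noncompact because $f$ is indefinite) and $\bfH$ is the stabilizer of a base point $x_0\in V(\mathbb{Q})$. This $\bfH$ is a one-dimensional torus, and the hypothesis $-a\det(f)\in(\mathbb{Q}^\times)^2$ is exactly the condition that $\bfH$ is $\mathbb{Q}$-split, i.e. $\bfH\cong\mathbb{G}_m$. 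It is this $\mathbb{Q}$-splitness — which turns the corresponding torus orbit into a divergent geodesic in the arithmetic quotient — that is responsible both for the extra $\log T$ and for the convergence factor $(1-p^{-1})$ at each finite place.

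First I would carry out the dynamical step. Realising $V(\mathbb{Z})\cap B_T$ through the correspondence with lattice points in $\bfG(\mathbb{Z})\backslash\bfG(\mathbb{R})$, I would apply the equidistribution of translates of the $\bfH(\mathbb{R})$-orbit in the spirit of Oh and Shah. The key point is that, unlike the case of an anisotropic stabilizer, the part $\{h\cdot x_0:\ h\in\bfH(\mathbb{R}),\ \norm{h\cdot x_0}\le T\}$ of the $\mathbb{Q}$-split torus orbit escapes logarithmically into the cusp, sweeping out volume growing like $\log T$. This is precisely the source of the factor $\log T$, and it reduces the determination of $c$ to the evaluation of a product of local volumes taken with respect to the invariant measures attached to the gauge form $\omega$.

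Next I would carry out the adelic step, which identifies $c$. Since $\bfG$ is simply connected with $\bfG(\mathbb{R})$ noncompact, strong approximation lets me write the integral points as a single adelic orbit modulo $\bfG(\widehat{\mathbb{Z}})$, up to a finite class-number correction, so that $c$ becomes an adelic volume of $V$. Two inputs enter. On one hand, the standard identity between point counts and $p$-adic volumes gives $\alpha_p(f,a)=\int_{V(\mathbb{Z}_p)}\omega_p$ for the measure $\omega_p$ induced by the gauge form. On the other hand, because $\bfH\cong\mathbb{G}_m$, the Tamagawa measure on the torus is defined only after introducing the convergence factors $\lambda_p=(1-p^{-1})=\zeta_p(1)^{-1}$; with these the Euler product $\prod_p\lambda_p\,\alpha_p(f,a)=\prod_p(1-p^{-1})\alpha_p(f,a)$ converges, since at every good prime $\alpha_p(f,a)=1+p^{-1}$ and the factor is $1-p^{-2}$. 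The ``missing'' factor $\zeta(s)^{-1}$ at $s=1$ — equivalently the pole of $\zeta$, which is the regulator of the split torus $\mathbb{G}_m$ — is exactly the quantity that the geometric truncation at level $T$ converts into $\log T$. Matching the two descriptions of the same divergence then yields the asserted constant.

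The hard part will be the exact matching of normalizations across the two steps: one has to check that the convergence factor forced on the split torus $\mathbb{G}_m$ at each prime is precisely $1-p^{-1}$ and that the compensating divergence is precisely $\log T$, with coefficient exactly $1$ in front of $\log T\int_{B_T}\omega$ and no stray constant. Concretely this means reconciling the Tamagawa normalization of $\bfH\cong\mathbb{G}_m$ (its regulator and the residue of $\zeta$ at $s=1$) with the geometric length of the divergent-geodesic segment inside $B_T$, while simultaneously ensuring that the archimedean form used in $\int_{B_T}\omega$ and the $p$-adic forms $\omega_p$ appearing in $\alpha_p(f,a)$ come from one and the same normalized gauge form $\omega$. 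Controlling the finite class-number (genus) correction so that it collapses correctly is a secondary technical point, handled by strong approximation for $\Spin(f)$.
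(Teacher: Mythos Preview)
Your proposal is correct and follows essentially the same route as the paper: the paper also realises $X\cong \Spin(f)/\mathbb{G}_m$, uses an Oh--Shah/Zhang equidistribution input to obtain the single-orbit asymptotic with the $\log T$ factor (Theorem~\ref{propMainSec2}), then combines strong approximation for $G$ and for $X$ with an explicit orbit decomposition (Lemmas~\ref{loc} and~\ref{glo}) and the Tamagawa number $\tau(G)=1$ to identify the constant, the factor $(1-p^{-1})$ emerging exactly as you say from $\int_{N_v}\mu_v=(1-q_v^{-1})\cdots$ for the maximal compact of $H\cong\mathbb{G}_m$. Your ``hard part'' is precisely what occupies the bulk of the paper's Section~3, where the normalization matching is carried out via equations (\ref{inner-tamagawa-y})--(\ref{general}) rather than appealed to abstractly.
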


We give the precise asymptotic formula for the example in \cite{DRS} (see Example \ref{example}).  

\begin{exa} If $f(x, y, z)=x^2+y^2 - h^2 z^2$ with $h\in \Bbb Z$ and  
$$ N(f, 1, T)= \# \{(\alpha, \beta, \gamma)\in \Bbb Z^3:  \ f(\alpha, \beta, \gamma) =1\ \text{with} \ \sqrt{\alpha^2+\beta^2+\gamma^2} \leq T \}  $$ for $T>0$, then  
$$ N(f, 1, T)\sim  \frac{8}{\pi \sqrt{1+h^2}} \cdot ( \prod_{p\mid h,\  p\neq 2} \frac{p-(\frac{-1}{p})}{p+1} )\cdot  T \log T $$ as $T\to \infty$. 
\end{exa}

The difficulty arising from this remaining case is that the infinity product 
 $$  \prod_{p \ \text{primes}} \alpha_p(f,a)  $$ is not convergent because 
 $\alpha_p(f,a)=1+p^{-1}$ for sufficiently large primes $p$ by \cite[Hilfssatz 12]{S}. Therefore the classical methods such as the circle method can not be applied. Instead, we use the method in \cite{WX} to study this problem, which is also valid over a general number field. In order to work over a general number field, one needs to extend the result of Oh and Shah in \cite{OS} which is only true over $\Bbb Q$, to a general number field. Indeed, such an extension over a general number field has been given in \cite{Zha} up to an implicit constant. One needs to determine this implicit constant in a precise way. Another ingredient of this paper comes from the fact that the stabilizer of a rational point is a split torus $\Bbb G_m$ when one considers the action of the spin group of $f$ on the equation. However,
the Tamagawa number of $\Bbb G_m$ is defined in a virtual way. One needs to use the Tamagawa number of $\Bbb G_m$ implicitly.  

Notations and terminology are standard if not explained. In particular,  we fix a number field $k$ and a non-degenerate ternary quadratic from $f(x,y,z)$ over $k$.
Let $a\in k^\times$ such that $-a\cdot det(f) \in (k^\times)^2$ and $$ X: \ \  f(x, y, z)=a$$  be an affine variety defined over $k$. Assume $X(k)\neq \emptyset$. Then $$-a\cdot det(f) \in (k^\times)^2 \Longrightarrow f \text{ is isotropic over $k$.} $$
Fixing $v_0\in X(k)$, one has     \begin{equation}\label{quot} X \cong  G /H  \ \ \ \text{with} \ \ \ G=\Spin(f) \cong  \SL_2 \ \ \ \text{and} \ \ \ H \cong \Bbb G_m \end{equation}  over $k$ by \cite[\S 5.6]{CTX}, where $H$ is the stabilizer of $v_0$ in $G$.

\begin{lem}\label{tran} For any field $F$ containing $k$, the group $G(F)$ acts on $X(F)$ transitively. \end{lem}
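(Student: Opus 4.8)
The plan is to deduce the transitivity from Galois cohomology, exploiting the presentation \eqref{quot} of $X$ as a homogeneous space. First I would observe that since $v_0\in X(k)\subseteq X(F)$, the set $X(F)$ is nonempty and carries a distinguished base point, and under the isomorphism \eqref{quot} we may identify $X(F)$ with $(G/H)(F)$, where $H$ is the stabilizer of $v_0$ in $G$.

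Next I would invoke the standard exact sequence of pointed sets in Galois cohomology attached to the homogeneous space $X\cong G/H$ with stabilizer $H$, namely
$$ G(F) \longrightarrow X(F) \stackrel{\delta}{\longrightarrow} H^1(F, H) \longrightarrow H^1(F, G). $$
The key general fact I would use is that two $F$-points of $X$ lie in the same $G(F)$-orbit if and only if they have the same image under the connecting map $\delta$. Consequently the set of $G(F)$-orbits on $X(F)$ is in bijection with the image of $\delta$, which by exactness equals the kernel of $H^1(F, H)\to H^1(F, G)$. It therefore suffices to prove that this kernel is trivial.

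The crux is then a vanishing statement for $H^1(F,H)$. Because $-a\cdot\det(f)\in(k^\times)^2$, the isomorphism \eqref{quot} gives $H\cong\Bbb G_m$ already over the base field $k$, hence over every extension $F\supseteq k$; that is, $H$ is a split one-dimensional torus over $F$. Hilbert's Theorem~90 then yields $H^1(F, H)=H^1(F,\Bbb G_m)=0$, so the kernel of $H^1(F, H)\to H^1(F, G)$ is a single point, the image of $\delta$ is trivial, and $X(F)$ consists of exactly one $G(F)$-orbit. This is the desired transitivity.

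I do not expect a genuine obstacle in this argument; the only point requiring care is to record that the splitting $H\cong\Bbb G_m$ holds over $k$ itself---this is precisely where the hypothesis $-a\cdot\det(f)\in(k^\times)^2$ is used---so that it persists over every field $F$ containing $k$ and Hilbert's Theorem~90 applies verbatim.
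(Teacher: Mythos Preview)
Your proposal is correct and follows essentially the same approach as the paper: both invoke the Galois-cohomology exact sequence for the homogeneous space $X\cong G/H$ and conclude transitivity from Hilbert's Theorem~90 applied to $H\cong\Bbb G_m$. Your write-up is somewhat more explicit about why the vanishing of $H^1(F,H)$ forces a single orbit, but the underlying argument is identical.
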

\begin{proof} Applying nonabelian Galois cohomology (see \cite[Chapter III, Prop.3.2.2 and \S 3.7]{Gi} ), one has the exact sequence of pointed sets
$$ 1\longrightarrow H(F) \longrightarrow G(F) \longrightarrow X(F) \longrightarrow H^1(F, H) \longrightarrow H^1(F, G) $$
by (\ref{quot}).   Since $H^1(F, \Bbb G_m)=0$ (Hilbert 90), one concludes that  $G(F)$ acts on $X(F)$ transitively.
\end{proof}

Since $k [X]^\times = k^\times$ by \cite[\S 5.6]{CTX}, one obtains that $X$ admits a unique $G$-invariant gauge form up to a
scalar by \cite[Chapter II, \S 2.2.2]{W}. Fixing an isomorphism $$H\cong \mathbb{G}_m=\Spec[t, t^{-1}], $$ we obtain the gauge form $\omega_H$ on $H$ by pulling back the differential form $t^{-1}dt$ on $\Bbb G_m$ through this isomorphism. Choose both a $G$-invariant gauge form $\omega_G$ on $G$ and a $G$-invariant gauge form $\omega_{X}$ on $X$ such that $\omega_G$, $\omega_H$ and $\omega_X$ match together algebraically in the sense of  \cite[\S 2.4]{W}. Let $\infty_k$ be the set of all archimedean primes of $k$. We use $v\leq \infty_k$ to denote all primes $v$ of $k$ and $v< \infty_k$ to denote all finite primes $v$ of $k$ respectively. Moreover, the completion of $k$ with respect to $v\leq \infty_k$ is denoted by $k_v$. Write ${\bf A}_{k}$ to be the adelic ring of $k$.  Let 
$ \lambda_v$,  $\mu_v$  and $ \nu_v $ be
the associated measures induced by $\omega_G$, $\omega_H$ and $\omega_X$ on $G(k_v)$,  $H(k_v)$ and  $X(k_v)$ for $v\leq \infty_k$ respectively.  Then
\begin{equation} \label{measures}  \int_{G(k_v)}  \phi (g) \lambda_v(g) = \int_{X(k_v)} \nu_v(\bar g) \int_{H(k_v)} \phi(gh) \mu_v(h)   \end{equation}
for any continuous function $\phi$ with compact support on $G(k_v)$ for $v\leq \infty_k$. Let $\parallel \cdot \parallel $ be the standard Euclidean norm on $\Bbb R^n$ and $\norm{\cdot}_{0}$ be a general linear norm on $\Bbb R^n$ satisfying 
    \begin{equation} \label{norm}
        C_0^{-1}\norm{\cdot} \leq \norm{\cdot}_0 \leq C_0 \norm{\cdot}.
    \end{equation}
for a fix constant $C_0>0$.

 \section{Counting in a single orbit}

In this section,  we will provide the precise asymptotic formula for counting the number of points in a single orbit by considering the action of an arithmetic lattice $\Gamma$ of $G(k)$ on $X(k)$. In fact, the general asymptotic formula has been established by the second author in \cite{Zha} up to an implicit constant. We will determine this implicit constant precisely.

\subsection{A model space}
Let $X_1$ be a variety over $k$ defined by the equation $x_{1,1}^2+x_{1,2}x_{2,1}=1$ and $G_1=\SL_2$. Define the action 
$$ G_1\times_k X_1 \longrightarrow X_1; \ \ \  (g,  \left[
    \begin{array}{cc}
      x_{1,1}   &  x_{1,2} \\
       x_{2,1}  &  -x_{1,1}
    \end{array}
    \right] ) \mapsto  g  \left[
    \begin{array}{cc}
      x_{1,1}   &  x_{1,2} \\
       x_{2,1}  &  -x_{1,1}
    \end{array}
    \right]  g^{-1} . $$ 
Let $$M_0=\left[
    \begin{array}{cc}
      1  &   \\
        &  -1
    \end{array}
    \right]\in X_1(k) \ \ \text{and $H_1$ be the stabilizer of $M_0$ in $G_1$. } $$ Then $H_1$ is a maximal torus of $G_1$. Fix an isomorphism $H_1\cong \Bbb G_m$.

  Consider the homomorphism of Lie groups 
  $$ \Nm: \ (k\otimes_{\Bbb Q} \R)^{\times} \cong (\bbR^\times)^r \oplus (\bbC^\times)^{s} \longrightarrow \R_{>0};  \ \ \ 
  (x_1, \cdots, x_r, y_1, \cdots, y_s) \mapsto \prod_{i=1}^r |x_i| \cdot \prod_{j=1}^s |y_j|^2 $$ where $r$  and $s$ are the number of real and complex primes of $k$ respectively. 
This induces the homomorphism of Lie groups
\begin{equation*}
   \begin{tikzcd}
      H_1(k\otimes_{\Bbb Q} \R) \cong (\bbR^\times)^r \oplus (\bbC^\times)^{s}   \arrow[r,"\Nm"] 
        & \bbR_{>0}
   \end{tikzcd}
\end{equation*}
with the kernel $\rmH_1^{\an}$. Then      
 \begin{equation*}
      H_1(k\otimes_{\Bbb Q} \bbR) = \rmH_1^{\an} \times  \rmH_1^{\spl} 
\end{equation*}
where    
$$\rmH_1^{\spl}= \{ 
    \left(
    \left[
    \begin{array}{cc}
        t &  \\
         & t^{-1}
    \end{array}\right],...,
    \left[
    \begin{array}{cc}
        t &  \\
         & t^{-1}
    \end{array}\right]
    \right) \in H_1(k\otimes_{\Bbb Q} \bbR) :  \ t\in \R_{>0} \} \cong \R_{>0} . $$

Choose the Haar measure $$(\bigwedge_{i=1}^r \frac{dx_i}{x_i})\wedge (\bigwedge_{j=1}^s2 \cdot \frac{du_j\wedge dv_j}{u_j^2+v_j^2}) \ \ \text{ on } \ \  (\bbR^\times)^r \oplus (\bbC^\times)^{s}$$ where $(x_1, \cdots, x_r)$ and $(u_1+v_1\sqrt{-1}, \cdots, u_s+v_s\sqrt{-1})$ are the coordinates of $(\bbR^\times)^r$ and $(\bbC^\times)^{s}$ respectively. 
By the fixed isomorphism $$H_1(k\otimes_{\Bbb Q} \R) \cong (\bbR^\times)^r \oplus (\bbC^\times)^{s} ,$$ one obtains the Haar measure $\mu_{H_1}$ on $H_1(k\otimes_{\Bbb Q} \R)$.  
Choosing the Haar measure $t^{-1}dt$ on $\R_{>0}$, one gets the Haar measure $\mu_{\rmH_1^{\spl}}$ on $\rmH_1^{\spl}$ by the fixed isomorphism $\rmH_1^{\spl}\cong \R_{>0}$. 
 There is a Haar measure $\mu_{\rmH_1^{\an}}$ on $\rmH_1^{\an}$ such that 
\begin{equation*}
    \mu_{H_1} \cong \mu_{\rmH_1^{\an}}\otimes \mu_{\rmH_1^{\spl}}.
\end{equation*}
By Lemma \ref{tran}, one can identify 
$$ X_1(k\otimes_{\Bbb Q} \R) \ \ \ \text{ with } \ \ \ G_1(k\otimes_{\Bbb Q} \R)/H_1(k\otimes_{\Bbb Q} \R)$$ and fix Haar measures $\lambda_{G_1}$ on $G_1(k\otimes_{\Bbb Q} \R)$ and $\nu_{X_1}$ on $ X_1(k\otimes_{\Bbb Q} \R)$ respectively such that $(\lambda_{G_1}, \mu_{H_1}, \nu_{X_1})$ match up.
   
 \begin{prop}\label{propSec2ModelCase} If $\Gamma_1$ is an arithmetic lattice in $G_1(k)$, then
\begin{equation*}
     \# \left\{
    M \in \Gamma_1 \cdot M_0: \ 
    \norm{M}_{0} \leq T
    \right\}
    \sim
     \frac{\mu_{\rmH_1^{\an}}{(\rmH_1^{\an}/\rmH_1^{\an}\cap \Gamma_1})}
     {\lambda_{G_1}{(G_1(k\otimes_{\Bbb Q} \bbR)/\Gamma_1)}}\cdot 
     \log{T}  \int_{B_T} \nu_{X_1}
\end{equation*}
as $T\to +\infty$, where 
$$ B_T = \left\{ x\in X_1(k\otimes_{\Bbb Q} \bbR): \  \norm{x}_{0}\leq T
\right\} $$ and $\norm{\cdot}_{0}$ is a linear norm on $(k\otimes_{\Bbb Q} \bbR)^3$ satisfying (\ref{norm}). 
\end{prop}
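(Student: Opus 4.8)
The plan is to take the asymptotic in the shape already produced by the second author in \cite{Zha} and to upgrade it by computing the implicit constant. Recall that \cite{Zha} (extending Oh--Shah \cite{OS} and Eskin--McMullen \cite{EM} to a general number field) gives
$$\#\{M\in\Gamma_1\cdot M_0:\ \norm{M}_0\le T\}\sim c\cdot\log T\int_{B_T}\nu_{X_1}$$
as $T\to+\infty$ for some positive constant $c$ depending only on the chosen measures; the whole task is therefore to show $c=\mu_{\rmH_1^{\an}}(\rmH_1^{\an}/\rmH_1^{\an}\cap\Gamma_1)\big/\lambda_{G_1}(G_1(k\otimes_{\Bbb Q}\bbR)/\Gamma_1)$.

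The structural input that organizes the computation is the decomposition of the stabilizer. By Dirichlet's unit theorem the arithmetic group $\Gamma_1\cap H_1$ consists of units and hence lies in $\ker(\Nm)=\rmH_1^{\an}$, where it is a cocompact lattice; it meets the split factor $\rmH_1^{\spl}\cong\bbR_{>0}$ only in the identity. Thus
$$H_1(k\otimes_{\Bbb Q}\bbR)/(\Gamma_1\cap H_1)\cong\big(\rmH_1^{\an}/(\rmH_1^{\an}\cap\Gamma_1)\big)\times\rmH_1^{\spl},$$
and, since $\mu_{H_1}=\mu_{\rmH_1^{\an}}\otimes\mu_{\rmH_1^{\spl}}$, the anisotropic factor is compact and contributes a finite volume while the noncompact split factor $\bbR_{>0}$ is the direction responsible for the logarithm.

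I would determine the constant by the equidistribution method, being careful to track every normalization. Writing $G=G_1(k\otimes_{\Bbb Q}\bbR)$ and smoothing the counting function against a nonnegative $\psi\in C_c(G/\Gamma_1)$, the count is compared with an integral over $G/\Gamma_1$; after unfolding along the closed orbit of the compact group $\rmH_1^{\an}/(\rmH_1^{\an}\cap\Gamma_1)$ (which produces the factor $\mu_{\rmH_1^{\an}}(\rmH_1^{\an}/\rmH_1^{\an}\cap\Gamma_1)$), one is reduced to the behaviour of the translates of this compact orbit under the split torus $\rmH_1^{\spl}\cong\bbR_{>0}$. Mixing of the $\bbR_{>0}$-flow on $G/\Gamma_1$ forces these translates to equidistribute toward $\lambda_{G_1}(G/\Gamma_1)^{-1}\lambda_{G_1}$, which accounts both for the normalization by $\lambda_{G_1}(G/\Gamma_1)$ and, once the constraint $\norm{\,\cdot\,}_0\le T$ is turned into a truncation of the split parameter at scale $\log T$, for the factor $\log T$ multiplying $\int_{B_T}\nu_{X_1}$. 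Comparing the outcome with the shape provided by \cite{Zha} pins down $c$.

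The step I expect to be the main obstacle is precisely this last matching of constants: the split torus $\rmH_1^{\spl}\cong\bbR_{>0}$ is not divided by a lattice, so its contribution to the leading term appears through the truncated $\log T$ rather than through a genuine covolume, and one must check that the measure $t^{-1}dt$ on $\bbR_{>0}$ is normalized so as to yield a clean $\log T$ with no spurious multiplicative factor. This is the manifestation, at the level of measures, of the fact that the Tamagawa number of $\Bbb G_m$ is only defined virtually; handling it correctly is what reduces the implicit constant of \cite{Zha} to the explicit ratio in the statement.
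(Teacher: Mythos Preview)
Your reading of what \cite{Zha} provides is off, and this misdirects the plan. The equidistribution theorem quoted from \cite{Zha} already delivers the count with the \emph{explicit} ratio $\mu_{\rmH_1^{\an}}(\rmH_1^{\an}/\rmH_1^{\an}\cap\Gamma_1)\big/\lambda_{G_1}(G_1(k\otimes_{\Bbb Q}\bbR)/\Gamma_1)$ in front; what it does \emph{not} hand you is the clean factor $\log T$. What one actually gets is
\[
\#\{M\in\Gamma_1\cdot M_0:\norm{M}_0\le T\}\ \sim\ \frac{\mu_{\rmH_1^{\an}}(\rmH_1^{\an}/\rmH_1^{\an}\cap\Gamma_1)}{\lambda_{G_1}(G_1(k\otimes_{\Bbb Q}\bbR)/\Gamma_1)}\int_{\norm{gM_0}_0\le T}\mu_{\rmH_1^{\spl}}(\Omega_{g,\ep})\,\nu_{X_1},
\]
where $\Omega_{g,\ep}\subset\rmH_1^{\spl}$ is the set of split-torus elements $t$ for which $gt$ keeps two wedge vectors $\Lambda(e_1),\Lambda(e_2)\in\bigwedge^{r+2s}(k\otimes_{\Bbb Q}\bbR)^2$ above height $\ep$. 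So the task is not to identify an unknown constant $c$; it is to prove the geometric asymptotic
\[
\int_{B_T}\mu_{\rmH_1^{\spl}}(\Omega_{g,\ep})\,\nu_{X_1}\ \sim\ \log T\int_{B_T}\nu_{X_1}.
\]

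The paper carries this out by an explicit Iwasawa computation. Writing $g=kuh$ with $k\in\rmK$, $u\in\rmU$, $h\in H_1(k\otimes_{\Bbb Q}\bbR)$, one has $\mu_{\rmH_1^{\spl}}(\Omega_{g,\ep})=\mu_{\rmH_1^{\spl}}(\Omega_{u,\ep})$, and $\log(\Omega_{u,\ep})$ is computed to be an explicit interval whose length equals $\log T+O(1)$ on a subset $B_{T,\eta}\subset B_T$ (defined by requiring each unipotent coordinate to exceed $\eta T$) whose complement has relative $\nu_{X_1}$-measure $O(\eta)$. Sending $T\to\infty$ and then $\eta\to 0$ gives the claim. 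This is a concrete two-sided estimate on an interval length together with a negligibility argument for the bad set, not a normalization bookkeeping; the issue you flag about $t^{-1}dt$ is genuine but becomes trivial once the interval is written down. Your mixing/unfolding outline is essentially what \cite{Zha} has already packaged, so rerunning it would not move you closer to the proposition --- the missing ingredient is the Iwasawa computation of $\Omega_{g,\ep}$ and the control of $B_T\setminus B_{T,\eta}$.
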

   
 \begin{proof} Consider 
 \begin{equation} \label{identification}
    G_1(k\otimes_{\Bbb Q}  \bbR) \cong \SL_2(\bbR) ^{\oplus r} \bigoplus \SL_2(\bbC)^{\oplus s} \ \ \ \text{acting on} \ \ \ (k\otimes_{\Bbb Q} \bbR)^2 \cong (\bbR^2) ^{\oplus r} \bigoplus (\bbC^2)^{\oplus s}
\end{equation}
as an $\bbR$-vector space naturally. This action induces an action of $G_1(k\otimes_{\Bbb Q}  \bbR)$ on 
$$ \bigwedge^{r+2s} (k\otimes_{\Bbb Q} \bbR)^2 $$
as an $\bbR$-vector space.

For any $v \in \R^2$, we define $v^{i}  \in (k\otimes_{\Bbb Q} \bbR)^2$ such that the $i$-th component of $v^{i}$ is $v$ and the rest is 0 by identification (\ref{identification}) for $1\leq i\leq r$.
Similarly  for any $w \in \bbC^2$, we define $w^{r+j}\in (k\otimes_{\Bbb Q} \bbR)^2$ such that the $(r+j)$-th component is $w$ and the rest is 0. Let $ \{e_1=(1,0), \ e_2=(0,1)\}$ be the standard basis of $\bbR^2$. Write 
\begin{equation*}
    \Lambda (e_k):= (\bigwedge_{i=1}^r e^i_k) \wedge (\bigwedge_{j=1}^s e^{r+j}_{k})\wedge (\bigwedge_{j=1}^s\sqrt{-1}e^{r+j}_{k}) \in  \bigwedge^{r+2s} (k\otimes_{\Bbb Q} \bbR)^2 
\end{equation*}
for $k=1,2$. For $g\in G_1(k\otimes_{\Bbb Q} \bbR) $ and $\ep>0$, we define 
\begin{equation*}
    \Omega_{g,\ep}:=
    \left\{
    t \in \rmH_1^{\spl}: \ 
    \norm{g t \cdot \Lambda (e_k)} \geq \ep, \;k=1,2
    \right\}.
\end{equation*}
By the equidistribution in \cite[Theorem 1.2]{Zha} (cf. \cite{OS}), one has,  for any $\ep>0$, 

\begin{equation} \label{propPrimCount}
    \# \left\{
    M \in \Gamma_1 \cdot M_0: \ 
    \norm{M}_{0} \leq T
    \right\}
    \sim
     \frac{\mu_{\rmH_1^{\an}}{(\rmH_1^{\an}/\rmH_1^{\an}\cap \Gamma_1})}
     {\lambda_{G_1}{(G_1(k\otimes_{\Bbb Q} \bbR)/\Gamma_1)}} \cdot
     \int_{\norm{gM_0}_{0}\leq T} \mu_{\rmH_1^{\spl}}(\Omega_{g,\ep}) \nu_{X_1}
\end{equation}
as $T\to +\infty$.

Let $$\rmK:= \SO_2(\R)^{\oplus r} \bigoplus \SU_2(\R)^{\oplus s}$$ be a maximal compact subgroup of $G_1(k\otimes_{\Bbb Q} \bbR)$
and 
\begin{equation}\label{cou}
    \rmU=\left\{
    \left(
    \left[
    \begin{array}{cc}
       1  & x_i \\
       0  & 1
    \end{array}
    \right],
    \left[
    \begin{array}{cc}
       1  & y_j \\
       0  & 1
    \end{array}
    \right]
    \right)
    : \  x_i \in \bbR, 1\leq i\leq r; \ y_j \in \bbC, 1\leq j\leq s 
    \right\}
\end{equation}
Then  the following map given by multiplication
\begin{equation} \label{iwasawa}
     \rmK \times \rmU \times H_1(k\otimes_{\Bbb Q} \R) \to
     G_1(k\otimes_{\Bbb Q} \bbR) 
\end{equation}
is  surjective such that all fibres are torsors under a finite group $\rmK\cap H_1(k\otimes_{\Bbb Q} \R)$ by Iwasawa decomposition. If $g=kuh$ with $k\in \rmK$, $u\in \rmU$ and $h\in H_1(k\otimes_{\Bbb Q} \R)$ under the decomposition (\ref{iwasawa}), then
\begin{equation*}
    \mu_{\rmH_1^{\spl}}(\Omega_{g,\ep}) = \mu_{\rmH_1^{\spl}}(\Omega_{u,\ep}).
\end{equation*}
If 
$$ u=  \left(\left[
    \begin{array}{cc}
       1  & x_1 \\
       0  & 1
    \end{array}
    \right], \cdots, 
    \left[
    \begin{array}{cc}
       1  & x_r \\
       0  & 1
    \end{array}
    \right],
    \left[
    \begin{array}{cc}
       1  & y_1 \\
       0  & 1
    \end{array}
    \right], \cdots,  \left[
    \begin{array}{cc}
       1  & y_s \\
       0  & 1
    \end{array}
    \right] \right) \in \rmU ,$$
then 
\begin{equation}\label{equaVolPolytope}
 \log: \ \    \Omega_{u,\ep} \cong 
    \left[\frac{\log{\ep}}{n}, 
    -\frac{\log{\ep}}{n} + \frac{1}{2n} \left(
    \sum \log{(x_i^2+1)} + \sum 2\log{(|y_j|^2+1)}
    \right)
    \right]
\end{equation}
by taking logarithm with the following identities 
\begin{equation*}
\begin{aligned}
       & \diag(\exp(t),\exp(-t)) \cdot \Lambda (e_1) = \exp{((r+2s)t)} \Lambda (e_1) \\
       & \diag(\exp(t),\exp(-t)) \cdot \Lambda (e_2) = \exp{(-(r+2s)t)} \Lambda (e_2)
\end{aligned}
\end{equation*}
for $t\in (k\otimes_{\Bbb Q} \bbR)$, $u \cdot \Lambda (e_1)=\Lambda (e_1)$ and 
\begin{equation*}
    \norm{u\cdot \Lambda (e_2)}  
    = \norm{
    \left( \begin{array}{c}
         x_1 \\
         1
    \end{array}\right)^1 \wedge ...\wedge
    \left( \begin{array}{c}
         y_s \\
         1
    \end{array}\right)^{r+s}
    } = \prod(x_i^2+1)^{1/2} \cdot \prod (|y_j|^2+1).
\end{equation*}

Fix a Haar measures $\mu_{\rmK}$ on $\rmK$ such that $\mu_{\rmK}(\rmK)=1$.  Choose a Haar measure $\mu_{\rmU}$ on $\rmU$ such that  $\mu_{\rmK}\otimes \mu_{\rmU} \otimes \mu_{H_1}$ is pushed to $\lambda_{G_1}$ by the surjective map (\ref{iwasawa}). Therefore $\nu_{X_1}$ is identified with (the push-forward of) $\mu_{\rmK}\otimes \mu_{\rmU}$. 
Since  $$\rmU \cong k\otimes_{\Bbb Q} \bbR\cong \bbR^r \oplus \bbC^s , $$  there exists a constant $C_U>0$ such that $\mu_{\rmU}$ is $C_U$ times the standard Lebesgue measure on $\bbR^r \oplus \bbC^s$. Moreover, we can identify $B_T$ as a subset of $\rmK \times \rmU$ by (\ref{iwasawa}).

For $T>0$ and $\eta\in (0,1)$, we define another subset $B_{T,\eta}$ of $\rmK \times \rmU$ by 
\begin{equation*}
    B_{T,\eta}:= \left\{(k,u)\in B_T : \ 
    |x_i| \geq \eta T,\  1\leq i\leq r; \ |y_j | \geq \eta T, \  1\leq j\leq s   \right\}
\end{equation*} by using the coordinates (\ref{cou}). 
When $\norm{\cdot}_0= \norm{\cdot}$, we write $B_T^{\st}$ and $B^{\st}_{T,\eta}$ for the corresponding sets respectively.
Then 
\begin{equation*}
    B^{\st}_{T/C_{0}} \subset B_T \subset B^{\st}_{C_{0}T}
\end{equation*}
by (\ref{norm}). 

Since
\begin{equation*}
    \left[
    \begin{array}{cc}
       1  &  x\\
       0  & 1
    \end{array}
    \right] 
     \left[
    \begin{array}{cc}
       1  &  0\\
       0  & -1
    \end{array}
    \right]
     \left[
    \begin{array}{cc}
       1  &  x\\
       0  & 1
    \end{array}
    \right]^{-1}
    =  \left[
    \begin{array}{cc}
       1  &  -2x\\
       0  & -1
    \end{array}
    \right]
\end{equation*}  and  $B_T^{\st}$ is left $\rmK$-invariant, 
we can write 
\begin{equation*}
\begin{aligned}
        B_T^{\st} &\cong\rmK \times 
    \left\{
    (x_i,y_j) : \
    2(r+s) + 4\sum |x_i|^2 + 4 \sum |y_j|^2 \leq T^2
    \right\}\\
    &= \rmK \times 
    \left\{
    (x_i,y_j) : \ 
    \sum |x_i|^2 + \sum |y_j|^2 \leq \frac{1}{4}(T^2-2r-2s)
    \right\}.
\end{aligned}
\end{equation*}
By (\ref{equaVolPolytope}), we find that 
\begin{equation*}
    \log (\Omega_{g,\ep}) \subset 
    \left[
    \frac{\log{\ep}}{n}, -\frac{\log{\ep}}{n}+\log{T}
    \right]
\end{equation*} for $[g]\in B_T^{\st}$. 
On the other hand if $[g]$ is in $B_{T,\eta}$, then
\begin{equation*}
    \log (\Omega_{g,\ep}) \supset 
    \left[
    \frac{\log{\ep}}{n}, -\frac{\log{\ep}}{n}+\log{T}+ \log{\eta}
    \right].
\end{equation*}
Consequently, one has 
\begin{equation*}
    \mu_{\rmH_1^{\spl}}(\Omega_{g,\ep}) \leq \log{T} + \log{C_{0}}+ \frac{2\log{\ep}}{n}
\end{equation*}
for $[g] \in B_T \subset B^{\st}_{C_{0}T}$ and 
\begin{equation*}
    \log{T} +\frac{2\log{\ep}}{n} + \log{\eta}
    \leq \mu_{\rmH_1^{\spl}}(\Omega_{g,\ep}) 
    \leq \log{T} + \log{C_{0}}+ \frac{2\log{\ep}}{n}
\end{equation*} for $[g] \in B_{T,\eta}$. 
Asymptotically, this is just $\log{T}$.

It remains to show that the complement of $B_{T,\eta}$ is negligible in $B_T$.
Indeed, it is not hard to see from the definition that there exists $C_1>0$ (independent of $\eta$) such that 
\begin{equation*}
     \nu_{X_1} (B^{\st}_T \setminus B^{\st}_{T,\eta}) \leq ( C_1 \eta ) \cdot
     \nu_{X_1} (B^{\st}_T).
\end{equation*}
We may also require $C_1$ to satisfy
\begin{equation*}
     \nu_{X_1} (B^{\st}_{C_{0}T}) \leq C_1 \cdot  \nu_{X_1} (B^{\st}_{C_{0}^{-1}T}) .
\end{equation*}
Since $$ (B_T\setminus B_{T,\eta} )\subset (B_{C_{0}T} \setminus B_{C_{0}T,C_{0}^{-1}\eta}) , $$ we have 
\begin{equation*}
     \nu_{X_1} (B_T \setminus B_{T,\eta}) 
    \leq (C_1^2 C_{0}^{-1}\eta )\cdot
     \nu_{X_1} (B_T).
\end{equation*}
By firstly letting $T\to +\infty$ and then letting $\eta\to 0$, we conclude that  for any $\ep>0$
\begin{equation*}
    \int_{B_T} \mu_{\rmH_1^{\spl}}(\Omega_{g,\ep}) \nu_{X_1}
    \sim 
      \log{T} \int_{B_T} \nu_{X_1} 
\end{equation*}
as $T\to +\infty$. The result follows from combining (\ref{propPrimCount}).
 \end{proof}

\subsection{The general case} For general case (\ref{quot}), we fix an isomorphism $H\cong \Bbb G_m$ which induces 
the homomorphism of Lie groups
\begin{equation} \label{prod}
\begin{aligned}
   \begin{tikzcd}
     H(k\otimes_{\Bbb Q} \R) \cong   (\bbR^\times)^r \oplus (\bbC^\times)^{s}   \arrow[r,"\Nm"] 
        & \bbR_{>0}; 
   \end{tikzcd} \\
    (x_1, \cdots, x_r, y_1, \cdots, y_s) \mapsto \prod_{i=1}^r |x_i| \cdot \prod_{j=1}^s |y_j|^2
    \end{aligned}
\end{equation}
with the kernel $\rmH^{\an}$ as before. Similarly, one has     
     $$ H(k\otimes_{\Bbb Q} \bbR) = \rmH^{\an} \times  \rmH^{\spl}  \ \ \ \text{
with  } \ \ \ \rmH^{\spl} \cong \R_{>0} . $$
Choosing the Haar measure $\mu_{\rmH^{\spl}}$ on $\rmH^{\spl}$ induced by $t^{-1}dt$ on $\R_{>0}$, one obtains a Haar measure $\mu_{\rmH^{\an}}$ on $\rmH^{\an}$ such that 
$\mu_{\infty_k} \cong \mu_{\rmH^{\an}}\otimes \mu_{\rmH^{\spl}}$, where $\mu_{\infty_k}$ is given by (\ref{measures}).
Now we can extend \cite[Theorem 1.2]{OS} over $\bbQ$ to a general number field. 

\begin{thm}\label{propMainSec2}
If $\Gamma$ is an arithmetic lattice in $G(k)$, then
\begin{equation*}
     \# \left\{
    v \in \Gamma \cdot v_0: \ 
    \norm{v}_{0} \leq T
    \right\}
    \sim
     \frac{\mu_{\rmH^{\an}}{(\rmH^{\an}/\rmH^{\an}\cap \Gamma})}
     {\lambda_{\infty_k}{(G(k\otimes_{\Bbb Q} \bbR)/\Gamma)}}\cdot 
     \log{T}  \int_{B_T} \nu_{\infty_k}
\end{equation*}
as $T\to +\infty$, where $\lambda_{\infty_k}$ and $\nu_{\infty_k}$ are given by (\ref{measures}), 
$$ B_T = \left\{ x\in X(k\otimes_{\Bbb Q} \bbR): \  \norm{x}_{0}\leq T
\right\} $$ and $\norm{\cdot}_{0}$ is a linear norm on $(k\otimes_{\Bbb Q} \bbR)^3$ satisfying (\ref{norm}).
\end{thm}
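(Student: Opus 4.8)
The plan is to reduce Theorem \ref{propMainSec2} to the model case Proposition \ref{propSec2ModelCase} by exhibiting a $k$-isomorphism of the entire counting datum $(G, X, H, v_0)$ with $(G_1, X_1, H_1, M_0)$ and transporting both the lattice and all the measures. First I would fix a $k$-isomorphism $\psi\colon G \xrightarrow{\sim} G_1 = \SL_2$, which exists since $G = \Spin(f) \cong \SL_2$, and, after composing with an inner automorphism of $G_1$, arrange $\psi(H) = H_1$; this is possible because $H$ and $H_1$ are both split maximal tori of $\SL_2$ and hence $G_1(k)$-conjugate. Through $\psi$ the quadratic space $(V, f)$ underlying $X$ becomes a three-dimensional representation of $G_1$, and since $\SO(f)$ acts geometrically irreducibly on $V$ in dimension three, this representation is the unique irreducible three-dimensional representation $\mathrm{Sym}^2$ of $\SL_2$, which is exactly the one realized on the space $V_1$ of trace-zero matrices. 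Thus there is a $k$-linear $G_1$-equivariant isomorphism $L\colon V \to V_1$, and by Schur's lemma $L$ carries $f$ to a scalar multiple $c\cdot q$ of the invariant form $q = -\det$ on $V_1$.

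Second, I would check that the rescaling sending $\{f = a\}$ to $X_1 = \{q = 1\}$ is defined over $k$. From $L^{\top}Q L = cF$ one gets $c^3\det(f) = \det(L)^2\det(q)$ with $\det(q) = -1/4$, so modulo squares $\det(f) \equiv -c$, and the hypothesis $-a\det(f) \in (k^\times)^2$ forces $ca \in (k^\times)^2$. Choosing $t \in k^\times$ with $t^2 = (ca)^{-1}$, the composite $\Phi := t\cdot L$ is a $k$-linear $G_1$-equivariant isomorphism carrying $X$ isomorphically onto $X_1$ and $v_0$ to a $k$-point with stabilizer $\psi(H) = H_1$. Since the only $k$-points of $X_1$ with stabilizer exactly $H_1$ are $\pm M_0$, and $M \mapsto -M$ is a norm- and $\nu_{X_1}$-preserving $G_1$-equivariant automorphism of $X_1$, after possibly replacing $\Phi$ by $-\Phi$ I may assume $\Phi(v_0) = M_0$. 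Then $\Gamma_1 := \psi(\Gamma)$ is again an arithmetic lattice, $\Phi$ identifies $\Gamma\cdot v_0$ with $\Gamma_1 \cdot M_0$, the norm $\norm{\cdot}_0$ transports to the linear norm $\norm{\Phi^{-1}(\cdot)}_0$ on $V_1(k\otimes_{\bbQ}\bbR)$, which still satisfies (\ref{norm}) once the constant is multiplied by the operator norms of $\Phi^{\pm 1}$, and $\Phi(B_T)$ is precisely the ball $B_T'$ for this transported norm.

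Third, and this is the crux, I would match the measures. By uniqueness up to scalar of invariant gauge forms, $\Phi$ and $\psi$ pull $\omega_{G_1}, \omega_{H_1}, \omega_{X_1}$ back to scalar multiples $\kappa_G\,\omega_G$, $\kappa_H\,\omega_H$, $\kappa_X\,\omega_X$. The fixed isomorphisms $H \cong \bbG_m \cong H_1$ differ at most by $t \mapsto t^{-1}$, under which the induced measure of $t^{-1}dt$ is invariant and $\Nm$ has the same kernel; hence $\psi$ carries $\rmH^{\an}$ to $\rmH_1^{\an}$ and $\mu_{\rmH^{\spl}}$ to $\mu_{\rmH_1^{\spl}}$, so the scalar $\kappa_H$ is confined to the anisotropic factor, giving $\psi_*\mu_{\rmH^{\an}} = \kappa_H\,\mu_{\rmH_1^{\an}}$. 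Because both triples satisfy the disintegration (\ref{measures}), which is preserved under the transport, the matching forces $\kappa_G = \kappa_H\kappa_X$. Substituting $\mu_{\rmH^{\an}}(\rmH^{\an}/\rmH^{\an}\cap\Gamma) = \kappa_H\,\mu_{\rmH_1^{\an}}(\rmH_1^{\an}/\rmH_1^{\an}\cap\Gamma_1)$, $\lambda_{\infty_k}(G(k\otimes_{\bbQ}\bbR)/\Gamma) = \kappa_G\,\lambda_{G_1}(G_1(k\otimes_{\bbQ}\bbR)/\Gamma_1)$ and $\int_{B_T}\nu_{\infty_k} = \kappa_X\int_{B_T'}\nu_{X_1}$ into Proposition \ref{propSec2ModelCase} applied to $\Gamma_1 \cdot M_0$ with the transported norm, the three scalars combine into $\kappa_G/(\kappa_H\kappa_X) = 1$ and yield exactly the asserted asymptotic. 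I expect the scalar bookkeeping to be the main obstacle: one must confirm that the split-torus normalization really transports with no extra factor, so that $\kappa_H$ lives entirely on the anisotropic part and the prefactor collapses to $1$, everything else being a routine transport of the model computation.
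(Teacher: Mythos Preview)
Your approach is correct and follows the same overall strategy as the paper: reduce to the model case by constructing a $k$-isomorphism of the whole datum $(G,H,X,v_0,\Gamma,\norm{\cdot}_0)$ with $(G_1,H_1,X_1,M_0,\Gamma_1,\norm{\cdot}_1)$ and then invoke Proposition~\ref{propSec2ModelCase}.

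The only substantive difference is in how the linear isomorphism of the ambient $3$-spaces is produced. The paper scales $f$ by $-\det(f)$ so that $-\det(f)\cdot f$ and $f_1=x^2+yz$ are isotropic ternary forms over $k$ with the same discriminant modulo squares, hence isometric by classical quadratic-form theory; this isometry $\sigma$ yields $\Phi\colon \Spin(f)\to G_1$ directly, and the induced map $\overline{g\circ\Phi}\colon X\to X_1$ automatically sends $v_0$ to $M_0$ once $H$ is conjugated onto $H_1$. You instead argue via representation theory: the $3$-dimensional $G_1$-module structure on $V$ forces $V\cong\mathrm{Sym}^2$, Schur's lemma gives $L^*q=c\cdot f$, and the hypothesis $-a\det(f)\in(k^\times)^2$ is exactly what makes $ca$ a square so that the rescaling lands $X$ on $X_1$ over $k$. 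Both routes arrive at the same map; yours makes the role of the arithmetic hypothesis on $-a\det(f)$ more visible, while the paper's is shorter because the quadratic-form classification absorbs that check. You are also more explicit than the paper about the measure transport: the paper simply asserts ``the Haar measures are preserved,'' whereas you track the scalars $\kappa_G,\kappa_H,\kappa_X$ and observe that the matching condition $\kappa_G=\kappa_H\kappa_X$ forces the prefactor to be $1$. In fact, since $\omega_H$ and $\omega_{H_1}$ are both pulled back from $t^{-1}dt$ and the induced isomorphism $H\to H_1$ corresponds to $t\mapsto t^{\pm1}$ on $\bbG_m$, one already has $|\kappa_H|=1$, so your concern about the split/anisotropic bookkeeping is not an obstacle.
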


\begin{proof} Let $det(f)=\delta$. Then $-\delta f$ is also an isotropic quadratic form over $k$ with the same discriminant as that of $$f_1(x,y,z):= x^2+yz \ \ \ \text{ up to $(k^\times)^2$} .$$ This implies that the quadratic space $(k^3, -\delta f)$ is isometric to the quadratic space $(k^3, f_1)$ over $k$ by \cite[58:4 Theorem and 58:6]{OM}. There is an invertible $k$-linear map $\sigma$ from $k^3$ to $k^3$ such that $\sigma$ induces an isomorphism of algebraic groups 
$$ \Phi: G=\Spin(f)=\Spin(-\delta f) \longrightarrow G_1 $$ over $k$.  Since any two maximal split tori of $G_1$ over $k$ are conjugated by an element in $G_1(k)$, there is $g\in G_1(k)$ such that the following diagram of algebraic groups 
$$\xymatrixcolsep{5pc}\xymatrix{ H \ar[d] \ar[r]^{g \circ \Phi|_{H}}_{\cong} &  H_1 \ar[d] \\
G \ar[r]^{g \circ \Phi}_{\cong} & G_1}$$
commutes, where the vertical arrows are the inclusion maps. Therefore $g\circ \Phi$ induces 
$$ \overline{g\circ \Phi}: \ X \longrightarrow X_1 $$ 
an isomorphism of homogeneous spaces over $k$ by sending $v_0$ to $M_0$. The conjugation of $g$ induces an invertible $k$-linear map on $k^3$ given by 
$$ g: \ k^3\longrightarrow k^3 ;  \ \ \ \left[
    \begin{array}{cc}
      x   &  y \\
       z  &  -x
    \end{array}
    \right]  \mapsto  g  \left[
    \begin{array}{cc}
      x   &  y \\
       z  &  -x
    \end{array}
    \right]  g^{-1} $$
for any $(x,y,z)\in k^3$.  By considering $k$-points, one has the following commutative diagram 
$$\xymatrixcolsep{5pc}\xymatrix{ X(k)  \ar[d] \ar[r]^{\overline{g \circ \Phi}}_{\cong} &  X_1(k) \ar[d] \\
k^3 \ar[r]^{g \circ \sigma}_{\cong} & k^3}$$
where the vertical arrows are the inclusion maps of the chosen coordinates. 

Let $\Gamma_1$ be the image of $\Gamma$ under the map $g \circ \Phi$ and $\norm{\cdot}_1$ be
the linear norm on $(k\otimes_{\Bbb Q} \bbR)^3$ defined by $\norm{\cdot}_1:= \norm{g\circ \sigma (\cdot)}_0$. Since  $\overline{g \circ \Phi}$ induces the bijection between the following sets
\begin{equation*}
    \begin{aligned}
    \left\{
    v\in \Gamma \cdot v_0: \ 
    \norm{v}_0\leq T
    \right\} 
    & \longleftrightarrow
    \left\{
    M\in \Gamma_1 \cdot M_0: \ 
    \norm{M}_1\leq T
    \right\}
    \end{aligned}
\end{equation*}
and the Haar measures are preserved,  one concludes that Theorem \ref{propMainSec2} follows from Proposition \ref{propSec2ModelCase}.
\end{proof}

 \section{Main result}

Let $\frak o_k$ be the ring of integers of $k$ and $\mathcal X$ be a separated scheme of finite type over $\frak o_k$ such that 
$ \mathcal X\times_{\frak o_k}k = X$. Since $H\cong  \Bbb G_m$ over $k$, there is the unique maximal open compact subgroup $N_v$ of $H(k_v)$ for all $v<\infty_k$.  Write $N_v=H(k_v)$ for $v\in \infty_k$.

Let $\mathcal G$ and $\mathcal H$ be smooth connected group schemes over $\frak o_k$ such that 
$$\mathcal G\times_{\frak o_k} k=G \ \ \ \text{and} \ \ \ \mathcal H \times_{\frak o_k} k=H$$ respectively.  There is a finite set $S$ of primes of $k$ containing $\infty_k$ such that 
 \begin{equation} \label{int-mod} \mathcal X\times_{\frak o_k} \frak o_{k_v} \cong (\mathcal G\times_{\frak o_k}\frak o_{k_v})/(\mathcal H \times_{\frak o_k} \frak o_{k_v}) 
 \ \ \ \text{and} \ \ \ \mathcal H(\frak o_{k_v})=N_v \end{equation} for all primes $v\not\in S$ induced by ($\ref{quot}$), where $\frak o_{k_v}$ is the integral ring of $k_v$. 
\begin{equation}\label{tama} h_k=[ H({\bf A}_{k}) : H(k) \cdot  \prod_{v\leq \infty_k} N_v]  \ \ \ \text{and} \ \ \ H(k) \cap  \prod_{v \leq \infty_k} N_v \cong \frak o_{k}^\times  \end{equation} 
where  $h_k$ is the class number of $k$.

Fix a maximal open compact subgroup $M_v$ of $G(k_v)$ containing $N_v$ for $v\in (S\setminus \infty_k)$.  Define 
 $$ \Gamma_v = \begin{cases} \mathcal G(\frak o_{k_v})  \ \ \ & \text{$v\not\in S$} \\
 \{g\in M_v: \ g \cdot \mathcal X(\frak o_{k_v})=\mathcal X(\frak o_{k_v}) \} \ \ \ & \text{$v\in (S\setminus \infty_k)$} \\ 
 G(k_v) \ \ \ & \text{$v\in\infty_k$}. 
 \end{cases} $$ Then $\Gamma_v$ acts on $\mathcal X (\frak o_{k_v})$ such that the number of orbits $[\mathcal X(\frak o_{k_v}): \Gamma_v]$ is finite for all $v\leq \infty_k$. Moreover, one has that $N_v \subset \Gamma_v$ for all $v\leq \infty_k$.

 \begin{lem} \label{loc} There are finitely many $x_1, \cdots, x_l \in \mathcal X(\frak o_k)$ such that 
 $$ \mathcal X(\frak o_k) = \bigcup_{i=1}^l [ (\prod_{ v \leq \infty_k} \Gamma_v )\cdot x_i\cap   \mathcal X(\frak o_k) ] \ \ \ \text{ with } \ \ \  l= \prod_{v\in (S\setminus \infty_k)} [\mathcal X(\frak o_{k_v}): \Gamma_v] .$$ \end{lem}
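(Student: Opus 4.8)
The plan is to recognize the right-hand side as counting the orbits of the compact-at-finite-places group $K:=\prod_{v\leq\infty_k}\Gamma_v$ acting on $\mathcal X(\frak o_k)$ through its natural action on $X(\mathbf A_k)$, where I view $\mathcal X(\frak o_k)=\{x\in X(k): x_v\in\mathcal X(\frak o_{k_v})\text{ for all }v<\infty_k\}$ diagonally embedded in $X(\mathbf A_k)$. Note that $K$ is an open subgroup of $G(\mathbf A_k)$ containing $G(k\otimes_{\Bbb Q}\bbR)$. Once I show that this action has exactly $l$ orbits, any choice of orbit representatives $x_1,\dots,x_l\in\mathcal X(\frak o_k)$ gives the asserted decomposition, each piece $[(\prod_{v\leq\infty_k}\Gamma_v)\cdot x_i\cap\mathcal X(\frak o_k)]$ being the trace on $\mathcal X(\frak o_k)$ of one $K$-orbit.

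The first step is to check that the $K$-orbit relation is detected place by place: for $x,y\in\mathcal X(\frak o_k)$, one has $y\in K\cdot x$ if and only if $x_v$ and $y_v$ lie in a common $\Gamma_v$-orbit of $\mathcal X(\frak o_{k_v})$ for every $v\leq\infty_k$. The forward implication is clear. For the converse, given $\kappa_v\in\Gamma_v$ with $\kappa_v x_v=y_v$ for each $v$ (such $\kappa_v$ exists at $v\in\infty_k$ by Lemma \ref{tran}, where $\Gamma_v=G(k_v)$), the tuple $(\kappa_v)_v$ lies in the restricted product $K$ automatically, since $\Gamma_v=\mathcal G(\frak o_{k_v})$ for all $v\notin S$. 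This yields an injection from the set of $K$-orbits into $\prod_{v<\infty_k}\mathcal X(\frak o_{k_v})/\Gamma_v$, sending an orbit to its tuple of local orbits.

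The second step removes the good places from this product. For $v\notin S$, the integral model (\ref{int-mod}) gives an exact sequence of pointed sets $\mathcal G(\frak o_{k_v})\to\mathcal X(\frak o_{k_v})\to H^1_{\et}(\frak o_{k_v},\mathcal H)$, so the $\Gamma_v=\mathcal G(\frak o_{k_v})$-orbits on $\mathcal X(\frak o_{k_v})$ inject into $H^1_{\et}(\frak o_{k_v},\mathcal H)$. Since $\mathcal H\cong\mathbb G_m$ over the discrete valuation ring $\frak o_{k_v}$, this group equals $\Pic(\frak o_{k_v})=0$; hence $\Gamma_v$ acts transitively and $[\mathcal X(\frak o_{k_v}):\Gamma_v]=1$ for $v\notin S$. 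Therefore the injection above factors through $\prod_{v\in S\setminus\infty_k}\mathcal X(\frak o_{k_v})/\Gamma_v$, a set of cardinality $l$, so there are at most $l$ orbits.

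The final and decisive step is to show that this map is surjective, giving exactly $l$ orbits. Because $H\cong\mathbb G_m$, Hilbert 90 gives $H^1(k_v,H)=0$ for all $v$ and hence $H^1(\mathbf A_k,H)=0$, so $G(\mathbf A_k)$ acts transitively on $X(\mathbf A_k)$. Given a prescribed tuple of local orbits, I choose integral representatives $z_v\in\mathcal X(\frak o_{k_v})$ for $v<\infty_k$ and arbitrary $z_v\in X(k_v)$ for $v\in\infty_k$, so that $(z_v)_v\in X(\mathbf A_k)$ and $(z_v)_v=\mathbf g\cdot v_0$ for some $\mathbf g\in G(\mathbf A_k)$. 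Since $G=\Spin(f)\cong\SL_2$ is simply connected and $\prod_{v\in\infty_k}G(k_v)$ is noncompact, strong approximation yields $G(\mathbf A_k)=K\cdot G(k)$; writing $\mathbf g=\kappa_0\gamma_0$ with $\kappa_0\in K$, $\gamma_0\in G(k)$, the rational point $x:=\gamma_0 v_0$ satisfies $\kappa_0\cdot x=(z_v)_v$. As each $\Gamma_v$ stabilizes $\mathcal X(\frak o_{k_v})$, the integrality of $(z_v)_v$ forces $x\in\mathcal X(\frak o_k)$, and by construction $x$ realizes the prescribed tuple. I expect this surjectivity to be the crux: the point is that no class-number factor intervenes, and this is precisely because $X$ is presented as a homogeneous space of the simply connected group $G\cong\SL_2$, for which strong approximation gives the clean identity $G(\mathbf A_k)=K\cdot G(k)$. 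Had one instead approximated on the torus $H\cong\mathbb G_m$, the class group of order $h_k$ from (\ref{tama}) would appear; it is for this reason that $h_k$ is recorded in the setup yet is absent from the orbit count $l$.
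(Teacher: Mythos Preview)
Your proof is correct and follows essentially the same strategy as the paper: both establish transitivity of $\Gamma_v$ on $\mathcal X(\frak o_{k_v})$ for $v\notin S$ via the vanishing of $H^1(\frak o_{k_v},\mathcal H)$, and then use strong approximation to realize every tuple of local orbits by a point of $\mathcal X(\frak o_k)$. The only cosmetic differences are that the paper cites strong approximation for $X$ from \cite{CTX} whereas you derive it from strong approximation for $G\cong\SL_2$, and the paper obtains the $H^1$ vanishing via Hensel's lemma and Lang's theorem rather than via $\Pic(\frak o_{k_v})=0$ (note that the setup does not literally assert $\mathcal H\cong\mathbb G_m$ over $\frak o_{k_v}$, only that $\mathcal H$ is a smooth connected model of $H$, but your identification holds after possibly enlarging $S$, so this is harmless).
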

 \begin{proof} By (\ref{int-mod}) and \cite[Chapter III, Proposition 3.2.2]{Gi}, the following sequence of pointed sets $$ 1\rightarrow {\mathcal H}(\frak o_{k_v}) \rightarrow {\mathcal G} (\frak o_{k_v}) \rightarrow {\mathcal X}(\frak o_{k_v}) \rightarrow \check{H}^1(\frak o_{k_v}, {\mathcal H}) $$ is exact for $v\not\in S$. Since $\mathcal H$ is connected and smooth over $\frak o_{k_v}$, every torsor over $\frak o_{k_v}$ under $\mathcal H$ is trivial by Hensel's Lemma (see \cite[Chapter 3, Thm.3.11]{PR94}) and Lang's Theorem (see \cite[Chapter 6, Thm.6.1]{PR94}). This implies $\check{H}^1(\frak o_{k_v}, {\mathcal H})$ is trivial. Therefore 
$$ [\mathcal X(\frak o_{k_v}): \Gamma_v]=1 $$ for all primes $v\not\in S$.

Since $X$ satisfies strong approximation off $\infty_k$ by \cite[Theorem 3.7 and \S 5.6] {CTX}, one obtains 
 $$ (\prod_{ v \leq \infty_k} \Gamma_v ) \cdot  \mathcal X(\frak o_k) = \prod_{v \leq \infty_k}  \mathcal X(\frak o_{k_v})  $$ by Lemma \ref{tran}.
 This implies that 
 $$ l= \prod_{v\leq \infty_k} [\mathcal X(\frak o_{k_v}): \Gamma_v] = \prod_{v\in (S\setminus \infty_k)} [\mathcal X(\frak o_{k_v}): \Gamma_v] $$ as desired.
 \end{proof}
 
 Let $$\Gamma= G(k) \cap (\prod_{ v \leq \infty_k} \Gamma_v ) . $$  Then $\Gamma$  is an arithmetic lattice in $G(k)$ and acts on $\mathcal X(\frak o_k)$. 
 
 \begin{rem} By \cite[6.9 Theorem]{BHC}, the number of  $\Gamma$-orbits of $\mathcal X(\frak o_k)$ is finite when $k=\Bbb Q$. It should be pointed out that a simple proof of this finiteness in \cite[Corollary 2.5]{WX} is not correct because the natural map in the proof is not injective. However, this finiteness result plays no role in the proof of the main results in \cite{WX}. In fact, the precise descriptions of number of orbits are given by \cite[Prop. 2.9, Prop.2.11 and Prop.2.12]{WX} and in particular the finiteness of orbits follows in those cases over a general number field.   
 \end{rem}

 Since $\Gamma$ acts on $[ ((\prod_{ v \leq \infty_k} \Gamma_v )\cdot x) \cap   \mathcal X(\frak o_k)] $ for any $x\in \mathcal X(\frak o_k)$, one can give the precise description of  number of orbits. 
 
 \begin{lem}  \label{glo} If $x\in \mathcal X(\frak o_k)$, then there are $y_1, \cdots, y_{h_x}$ in $\mathcal X(\frak o_k)$ such that 
 $$ ((\prod_{ v\leq \infty_k} \Gamma_v )\cdot x )\cap   \mathcal X(\frak o_k) = \bigcup_{i=1}^{h_x} \Gamma y_i $$
 with
 $$ h_x= [H_x({\bf A}_{k}): H_x(k)\cdot \prod_{v \leq \infty_k}(H_x(k_v)\cap  \Gamma_v)] <\infty $$ where $H_x$ is the stabilizer of $x$ in $G$.
 \end{lem}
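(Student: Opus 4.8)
The plan is to set up a bijection between the set of $\Gamma$-orbits in $((\prod_{v\leq\infty_k}\Gamma_v)\cdot x)\cap\mathcal X(\frak o_k)$ and a double coset space attached to the stabilizer $H_x$, and then to identify the cardinality of the latter with $h_x$. Write $U=\prod_{v\leq\infty_k}\Gamma_v$, an open subgroup of $G(\B A_k)$ containing $\prod_{v\in\infty_k}G(k_v)$, so that $\Gamma=G(k)\cap U$. First I would note that since each $\Gamma_v$ preserves $\mathcal X(\frak o_{k_v})$ and $x\in\mathcal X(\frak o_{k_v})$ for all $v<\infty_k$, one has $U\cdot x\subset\prod_v\mathcal X(\frak o_{k_v})$, whence $(U\cdot x)\cap\mathcal X(\frak o_k)=(U\cdot x)\cap X(k)$. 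For $y$ in this set, Lemma \ref{tran} gives $g\in G(k)$ with $g\cdot x=y$, and by definition there is $u\in U$ with $u\cdot x=y$; then $g^{-1}u$ fixes $x$, so $g^{-1}u\in H_x(\B A_k)$. I would send $y$ to the class of $g^{-1}u$ in $H_x(k)\backslash H_x(\B A_k)/(H_x(\B A_k)\cap U)$, noting that $H_x(\B A_k)\cap U=\prod_{v\leq\infty_k}(H_x(k_v)\cap\Gamma_v)$.

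The next step is to show this map is well defined and descends to a bijection on $\Gamma$-orbits. Independence of the choices is immediate: replacing $g$ by $g\eta$ with $\eta\in H_x(k)$ multiplies $g^{-1}u$ on the left by $\eta^{-1}$, while any other $u'$ with $u'x=y$ satisfies $u^{-1}u'\in H_x(\B A_k)\cap U$ and multiplies on the right, so the class is unchanged. Replacing $y$ by $\gamma y$ with $\gamma\in\Gamma$, one may take the representatives $\gamma g\in G(k)$ and $\gamma u\in U$, which leave $g^{-1}u$ unchanged; hence the map factors through $\Gamma$-orbits. For injectivity, suppose $y_1=g_1x=u_1x$ and $y_2=g_2x=u_2x$ satisfy $g_2^{-1}u_2=\eta(g_1^{-1}u_1)\theta$ with $\eta\in H_x(k)$ and $\theta\in H_x(\B A_k)\cap U$. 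I would set $\gamma:=g_2\eta g_1^{-1}\in G(k)$, which satisfies $\gamma y_1=y_2$. The key point is that $H_x\cong\Bbb G_m$ is \emph{abelian}: substituting $g_i=u_i(g_i^{-1}u_i)^{-1}$ and commuting the $H_x$-valued factors past one another rewrites $\gamma=u_2\theta^{-1}u_1^{-1}$, which exhibits $\gamma\in U$ as well; hence $\gamma\in\Gamma$ and $y_1,y_2$ lie in one $\Gamma$-orbit.

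For surjectivity I would invoke strong approximation: since $G\cong\SL_2$ is simply connected and $G(k\otimes_{\Bbb Q}\bbR)$ is non-compact ($f$ being indefinite), $G$ satisfies strong approximation off $\infty_k$, and as $U$ is open and contains $\prod_{v\in\infty_k}G(k_v)$ this yields $G(\B A_k)=G(k)\cdot U$ (this is the same input used in Lemma \ref{loc}). Then any $h\in H_x(\B A_k)$ can be written $h=g^{-1}u$ with $g\in G(k)$ and $u\in U$, and $y:=gx=ux$ lies in $(U\cdot x)\cap X(k)$ and maps to the class of $h$. Thus the $\Gamma$-orbits are in bijection with $H_x(k)\backslash H_x(\B A_k)/(H_x(\B A_k)\cap U)$; because $H_x$ is abelian this double coset space is the quotient group, of order exactly $[H_x(\B A_k):H_x(k)\cdot\prod_{v\leq\infty_k}(H_x(k_v)\cap\Gamma_v)]=h_x$. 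Finiteness of $h_x$ is the finiteness of the class number of the torus $H_x\cong\Bbb G_m$: identifying $H_x(\B A_k)$ with $\B A_k^\times$ and $H_x(k)$ with $k^\times$, this is a quotient of the id\`ele class group by an open subgroup containing all local units and the archimedean component, hence finite. Choosing $\Gamma$-orbit representatives $y_1,\dots,y_{h_x}$ then finishes the proof.

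I expect the main obstacle to be the injectivity argument, where the commutativity of $H_x$ is indispensable: the identity $g_2\eta g_1^{-1}=u_2\theta^{-1}u_1^{-1}$ holds only after commuting $H_x$-valued factors, so the entire orbit count would fail for a nonabelian stabilizer — it is precisely here that the structure $H\cong\Bbb G_m$ from (\ref{quot}) is used. A secondary point requiring care is the exact matching of the local intersection $H_x(\B A_k)\cap U$ with $\prod_v(H_x(k_v)\cap\Gamma_v)$, together with checking that at archimedean $v$ the factor $H_x(k_v)\cap\Gamma_v=H_x(k_v)$ is consistent with the stated definition of $h_x$.
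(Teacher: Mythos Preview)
Your proof is correct and follows essentially the same route as the paper: both construct the map $y\mapsto g^{-1}u\in H_x(\mathbf A_k)$ (the paper writes $\tau^{-1}\sigma$), check well-definedness, obtain surjectivity from strong approximation for $G$, and deduce injectivity by producing $\gamma=g_2\eta g_1^{-1}=u_2\theta^{-1}u_1^{-1}\in G(k)\cap U=\Gamma$, with finiteness coming from the class-number finiteness of the torus $H_x$. One small remark: the identity $\gamma=u_2\theta^{-1}u_1^{-1}$ in your injectivity step actually follows by direct cancellation without invoking commutativity of $H_x$; abelianness is genuinely used only to identify the double coset space with the group quotient of index $h_x$.
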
  
 \begin{proof}  For any $$y\in  ((\prod_{ v\leq \infty_k} \Gamma_v )\cdot x )\cap   \mathcal X(\frak o_k) , $$ there is $\sigma \in \prod_{ v\leq \infty_k} \Gamma_v$ such that $y=\sigma x$.    By Lemma \ref{tran}, there is $\tau\in G(k)$ such that $y=\tau x$. This implies that $\tau^{-1} \sigma \in H_x({\bf A}_{k})$. Define a map 
 $$ \Phi:  \ ( (\prod_{ v\leq \infty_k} \Gamma_v )\cdot x )\cap   \mathcal X(\frak o_k) \longrightarrow H_x({\bf A}_{k})/[H_x(k)\cdot \prod_{v \leq \infty_k}(H_x(k_v)\cap  \Gamma_v )] $$ by sending $y$ to $\tau^{-1} \sigma$. Suppose there are $$\sigma'\in  \prod_{ v\leq \infty_k} \Gamma_v \ \ \ \text{ and } \ \ \ \tau'\in  G(k)$$ such that $y=\sigma' x=\tau' x$. Then $$\sigma^{-1} \sigma' \in  \prod_{v \leq \infty_k}(H_x(k_v)\cap  \Gamma_v )  \ \ \ \text{and} \ \ \ \tau^{-1} \tau' \in H_x(k) . $$ 
 This implies that the map $\Phi$ is well-defined. For any $\xi \in H_x({\bf A}_{k})\subset G({\bf A}_{k})$, there are $$\gamma \in \prod_{ v\leq \infty_k} \Gamma_v \ \ \ \text{ and } \ \ \  \eta \in G(k)$$ such that $\xi=\eta\cdot \gamma$ by strong approximation for $G$ (see \cite[Theorem 7.12]{PR94}). Therefore 
 $$ \eta^{-1}x=\gamma x \in ( (\prod_{ v\leq \infty_k} \Gamma_v )\cdot x )\cap X(k) =( (\prod_{ v\leq \infty_k} \Gamma_v )\cdot x )\cap   \mathcal X(\frak o_k)  $$
 and $\Phi$ is surjective. 
  Suppose $$\Phi(y)= \Phi (y_1) \ \ \ \text{ with } \ \ y, y_1\in  ( (\prod_{ v\leq \infty_k} \Gamma_v )\cdot x )\cap   \mathcal X(\frak o_k) . $$ There are $$\sigma, \sigma_1 \in  \prod_{ v\leq \infty_k} \Gamma_v \ \ \ \text{ and  } \ \ \ \tau, \tau_1 \in G(k)$$ with $y=\sigma x= \tau x$ and $y_1=\sigma_1 x =\tau_1 x$ respectively such that 
 $$( \tau^{-1} \sigma )^{-1} (\tau_1^{-1} \sigma_1) \in H_x(k)\cdot \prod_{v \leq \infty_k}(H_x(k_v)\cap  \Gamma_v ) . $$
This implies that there are $h\in H_x(k)$ and $\delta \in \prod_{v \leq \infty_k}(H_x(k_v)\cap  \Gamma_v )  $ such that 
$$ \tau_1\tau^{-1} h= \sigma_1\sigma^{-1} \delta \in (G(k) \cap \prod_{v \leq \infty_k} \Gamma_v) = \Gamma . $$ Then $y_1\in \Gamma y$. 
Therefore the map induced by $\Phi$ 
 $$   [(\prod_{ v\leq \infty_k} \Gamma_v )\cdot x \cap   \mathcal X(\frak o_k)]/\Gamma  \xrightarrow{\cong}  H_x({\bf A}_k)/[H_x(k)\cdot \prod_{v \leq \infty_k}(H_x(k_v)\cap  \Gamma_v )] $$ is a bijection.  The finiteness of $h_x$ follows from \cite[Chapter 5, Thm.5.1]{PR94}. 
  \end{proof}

Define 
$$N({\mathcal X}, T)= \# \{(\alpha, \beta, \gamma) \in {\mathcal  X}(\frak o_k)\subset X(k): \ \  \norm{ (\alpha, \beta, \gamma)}_0   \leq T \} $$ 
for $T> 0$, where $\norm{\cdot}_0$ is a norm on $(k\otimes_{\Bbb Q} \Bbb R)^3$ satisfying (\ref{norm}). Let $r$ and $s$ be the numbers of real and complex primes of $k$ respectively, 
$w_k$ be the number of roots of unity in $k$, $d_k$ be the discriminant of $k$ and $R_k$ be the regulator of $k$. The main result of this section is the following theorem.

\begin{thm}\label{main} If $q_v$ is the number of elements in the residue field of $k$ at $v<\infty_k$, then
$$ N({\mathcal X}, T) \sim  \frac{2^r (2\pi)^s}{w_k\sqrt{|d_k|}} R_k h_k \cdot (\prod_{v< \infty_k} (1-q_v^{-1}) \int_{\mathcal X(\frak o_{k_v})} \nu_v) \cdot  (\log{T}  \int_{B_{T}} \nu_{\infty_k})$$ as $T\to \infty$ where 
$$ B_T= \{ (\alpha, \beta, \gamma) \in X(k\otimes_{\Bbb Q} \Bbb R): \ \ \norm{ (\alpha, \beta, \gamma) }_0 \leq T \} . $$ 
\end{thm}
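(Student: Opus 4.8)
The plan is to decompose the integral points $\mathcal{X}(\mathfrak{o}_k)$ into finitely many $\Gamma$-orbits, apply the single-orbit asymptotic of Theorem \ref{propMainSec2} to each orbit, and then reassemble the resulting leading constants into a global adelic volume which the analytic class number formula evaluates. By Lemma \ref{loc} the set $\mathcal{X}(\mathfrak{o}_k)$ is a union of $l=\prod_{v\in S\setminus\infty_k}[\mathcal{X}(\mathfrak{o}_{k_v}):\Gamma_v]$ pieces of the shape $(\prod_{v\le\infty_k}\Gamma_v)\cdot x_i\cap\mathcal{X}(\mathfrak{o}_k)$, and by Lemma \ref{glo} each such piece is a union of $h_{x_i}$ orbits under $\Gamma=G(k)\cap\prod_v\Gamma_v$. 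Hence $N(\mathcal{X},T)=\sum_{\text{orbits}}\#\{v\in\Gamma\cdot y:\norm{v}_0\le T\}$ is a finite sum. The stabilizer $H_y$ of any point $y$ is a $G(k)$-conjugate of $H$, so it carries the same $G$-invariant gauge-form data and admits the same decomposition $H_y(k\otimes_{\mathbb{Q}}\mathbb{R})=\rmH_y^{\an}\times\rmH_y^{\spl}$; thus Theorem \ref{propMainSec2} applies verbatim to $\Gamma\cdot y$ with base point $y$. Since the factor $\log T\int_{B_T}\nu_{\infty_k}$ and the denominator $\lambda_{\infty_k}(G(k\otimes_{\mathbb{Q}}\mathbb{R})/\Gamma)$ are common to all orbits, the problem reduces to evaluating
$$\frac{1}{\lambda_{\infty_k}(G(k\otimes_{\mathbb{Q}}\mathbb{R})/\Gamma)}\sum_{\text{orbits}}\mu_{\rmH_y^{\an}}\bigl(\rmH_y^{\an}/\rmH_y^{\an}\cap\Gamma\bigr).$$

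For the denominator I would use that $G=\Spin(f)\cong\SL_2$ is semisimple and simply connected, with Tamagawa number $\tau(G)=1$. Because $G$ has strong approximation off $\infty_k$ (as already invoked in Lemma \ref{glo}) and the finite part $\prod_{v<\infty_k}\Gamma_v$ is compact open, the adelic quotient factors as a measure space and $\lambda_{\infty_k}(G(k\otimes_{\mathbb{Q}}\mathbb{R})/\Gamma)=|d_k|^{3/2}\big/\prod_{v<\infty_k}\lambda_v(\Gamma_v)$, where the power $|d_k|^{3/2}$ is the discriminant normalization of the Tamagawa measure in dimension $\dim G=3$.

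For the numerator — which carries the real content — I would reassemble the orbit sum into an adelic volume of $H$. Using the adelic parametrization of Lemma \ref{glo}, in which the orbits of a piece correspond to $H_{x_i}(\mathbb{A}_k)/[H_{x_i}(k)\prod_v(H_{x_i}(k_v)\cap\Gamma_v)]$, together with (\ref{tama}), the sum over all orbits reconstructs the covolume of the norm-one idele-class quotient $H(\mathbb{A}_k)^1/H(k)$, weighted by the finite local volumes $\prod_{v<\infty_k}\mu_v(N_v)$. Crucially, the split direction $\rmH^{\spl}\cong\mathbb{R}_{>0}$ is exactly the one that the equidistribution of Theorem \ref{propMainSec2} has already converted into the $\log T$ factor, so only the anisotropic norm-one part survives into the constant. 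Its covolume is computed by Tate's analytic class number formula as the residue $\operatorname{Res}_{s=1}\zeta_k(s)=\frac{2^r(2\pi)^s}{w_k\sqrt{|d_k|}}R_kh_k$, with the units $\mathfrak{o}_k^\times$ contributing the regulator $R_k$, the ideal classes contributing $h_k$, and $w_k$ accounting for the torsion.

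Finally I would convert group volumes into local densities by the matching (\ref{measures}): for $v\notin S$ the map $\mathcal{G}(\mathfrak{o}_{k_v})\to\mathcal{X}(\mathfrak{o}_{k_v})$ with $N_v$-torsor fibres gives $\lambda_v(\Gamma_v)=\nu_v(\mathcal{X}(\mathfrak{o}_{k_v}))\,\mu_v(N_v)$, so the $\lambda_v(\Gamma_v)$ in the denominator and the $\mu_v(N_v)$ in the numerator combine to leave precisely $\prod_{v<\infty_k}\int_{\mathcal{X}(\mathfrak{o}_{k_v})}\nu_v$, while the gauge-form normalization $\mu_v(N_v)=\mu_v(\mathfrak{o}_{k_v}^\times)$ contributes one factor $(1-q_v^{-1})$ at each finite place; since $\int_{\mathcal{X}(\mathfrak{o}_{k_v})}\nu_v\to 1+q_v^{-1}$ for large $v$ by \cite{S}, the regularized product $\prod_{v<\infty_k}(1-q_v^{-1})\int_{\mathcal{X}(\mathfrak{o}_{k_v})}\nu_v$ converges. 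The main obstacle will be exactly this numerator step, namely handling the virtual Tamagawa number of the split torus $H=\mathbb{G}_m$: its adelic quotient has infinite volume and its naive product of local densities diverges, so one must peel off the $\mathbb{R}_{>0}$ direction absorbed into $\log T$, recognize the finite remaining covolume as $\operatorname{Res}_{s=1}\zeta_k(s)$, and track the discriminant and convergence-factor normalizations carefully enough that the powers of $|d_k|$ from $\tau(G)$, from the local gauge measures, and from the class number formula telescope to the single $\sqrt{|d_k|}$, and the convergence factors land exactly on $(1-q_v^{-1})$.
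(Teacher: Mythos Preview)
Your outline matches the paper's proof in structure: decompose via Lemmas \ref{loc} and \ref{glo}, apply Theorem \ref{propMainSec2} orbit by orbit, use $\tau(G)=1$ with strong approximation for the denominator, and reassemble the numerator into an $H$-adelic quantity evaluated by the class number formula. Two steps you gloss over are carried out explicitly in the paper and are worth flagging.

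First, your formula $\lambda_{\infty_k}(G(k\otimes_{\mathbb Q}\mathbb R)/\Gamma)=|d_k|^{3/2}\big/\prod_{v<\infty_k}\lambda_v(\Gamma_v)$ is off in the paper's convention: the local measures $\lambda_v$ are Weil's gauge-form measures, which already carry the local different, so $\tau(G)=1$ gives simply $\lambda_{\infty_k}(G(k\otimes_{\mathbb Q}\mathbb R)/\Gamma)^{-1}=\prod_{v<\infty_k}\lambda_v(\Gamma_v)$ with no extra discriminant factor. The single $|d_k|^{-1/2}$ in the final answer comes entirely from $\prod_{v<\infty_k}\int_{N_v}\mu_v=\prod_{v<\infty_k}(1-q_v^{-1})[\mathfrak o_{k_v}:D_{k_v}]^{-1/2}$; there is no telescoping of three separate discriminant powers.

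Second, and more substantively, your claim that the orbit sum ``reconstructs the covolume of the norm-one idele-class quotient'' hides a real step: you need that $\mu_{\rmH_y^{\an}}(\rmH_y^{\an}/\rmH_y^{\an}\cap\Gamma)$ is the \emph{same} for every $y$ in a given local piece $(\prod_v\Gamma_v)\cdot x_i$, so that the sum over its $h_{x_i}$ orbits is simply $h_{x_i}$ times one term. The paper proves this (equation (\ref{inner-form-equality})) by conjugating each $H_{y_{i,j}}$ back to $H$ and using that $H$ is commutative, whence $H(k_v)\cap g_{i,j}^{-1}\Gamma_v g_{i,j}=H(k_v)\cap g_i^{-1}\Gamma_v g_i$ for all $j$. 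Without this the adelic repackaging does not factor. The same device is what handles the bad primes $v\in S\setminus\infty_k$: your fibration identity $\lambda_v(\Gamma_v)=\nu_v(\mathcal X(\mathfrak o_{k_v}))\mu_v(N_v)$ is only valid for $v\notin S$, and at the bad primes one instead sums over the local $\Gamma_v$-orbits as in (\ref{local-index}), which is exactly where the indices $[N_v:N_v\cap g_i^{-1}\Gamma_v g_i]$ from the numerator get absorbed.
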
  
\begin{proof} Since $\mathcal X(\frak o_k)$ can be decomposed into the following disjoint union 
$$ \mathcal X(\frak o_k)= \bigcup_{i=1}^l [ (\prod_{v \leq \infty_k} \Gamma_v )\cdot x_i\cap   \mathcal X(\frak o_k) ]= \bigcup_{i=1}^l \bigcup_{j=1}^{h_{x_i}} \Gamma y_{i, j} $$
with 
$$ (\prod_{ v\leq \infty_k} \Gamma_v )\cdot x_i \cap   \mathcal X(\frak o_k) = \bigcup_{j=1}^{h_{x_i}} \Gamma y_{i, j} \ \ \ \text{and} \ \ \ h_{x_i}= [H_{x_i}({\bf A}_{k}): H_{x_i}(k)\cdot \prod_{v \leq \infty_k}(H_{x_i}(k_v)\cap  \Gamma_v )] $$ for $1\leq i\leq l$ by Lemma \ref{loc} and Lemma \ref{glo}, one obtains that 
$$N({\mathcal X}, T) = \sum_{i=1}^l \sum_{j=1}^{h_{x_i}} N(y_{i,j}, T) $$ where 
$$N(y_{i,j}, T)= \{ (\alpha, \beta, \gamma)\in \Gamma y_{i,j}: \ \ \norm{ (\alpha, \beta, \gamma) }_0 \leq T \} $$ for $1\leq j\leq h_{x_i}$ and $1\leq i\leq l$. 
 By Theorem \ref{propMainSec2}, one has  
$$N(y_{i,j}, T) \sim  \frac{\mu_{\rmH_{y_{i,j}}^{\an}}{(\rmH_{y_{i,j}}^{\an}/\rmH_{y_{i,j}}^{\an}\cap \Gamma})}
     {\lambda_{\infty_k}{(G(k\otimes_{\Bbb Q} \bbR)/\Gamma)}}\cdot 
     \log{T}  \int_{B_{i, j}(T)} \nu_{\infty_k} $$  with 
$$ B_{i, j}(T) = \{ z\in  G(k\otimes_{\Bbb Q}\Bbb R) y_{i,j} \subseteq X(k\otimes_{\Bbb Q} \Bbb R):   \  \norm{ z }_0 \leq T \} $$ as $T\to \infty$  for $1\leq j\leq h_{x_i}$ and $1\leq i\leq l$.

By Lemma \ref{tran}, there is $g_{i, j}\in G(k)$ such that 
\begin{equation} \label{transv-y}   g_{i, j} v_0= y_{i, j} \ \ \ \text{ and } \ \ \ H_{y_{i, j}}=g_{i, j} H g_{i, j}^{-1} \end{equation}  for $1\leq j\leq h_{x_i}$ and $1\leq i\leq l$. Since $H(k_v)\cap g_{i, j}^{-1}\Gamma_v g_{i, j}$ is an open and compact subgroup of $H(k_v)$ for all $v<\infty_k$, one has  
\begin{equation} \label{loc-max}  (H(k_v)\cap g_{i, j}^{-1}\Gamma_v g_{i, j}) \subseteq N_v \ \ \ \text{and} \ \ \ H(k_v)\cap g_{i, j}^{-1}\Gamma_v g_{i, j}=N_v\cap g_{i, j}^{-1}\Gamma_v g_{i, j}  \end{equation} 
for all $v<\infty_k$, $1\leq j\leq h_{x_i}$ and $1\leq i\leq l$. Then 
\begin{equation} \label{inner-tamagawa-y}  \begin{aligned} & [(H(k) \cap  \prod_{v\leq \infty_k} N_v) : (H(k)\cap \prod_{v\leq \infty_k} ( H(k_v)\cap g_{i, j}^{-1}\Gamma_v g_{i, j}))]^{-1} \mu_{\rmH_{y_{i, j}}^{\an}}{(\rmH_{y_{i, j}}^{\an}/\rmH_{y_{i, j}}^{\an}\cap \Gamma}) \\
= \ & [(H(k) \cap  \prod_{v\leq \infty_k} N_v) : (H(k)\cap \prod_{v\leq \infty_k} ( H(k_v)\cap g_{i, j}^{-1}\Gamma_v g_{i, j}))]^{-1} \mu_{\rmH^{\an}}{(\rmH^{\an}/\rmH^{\an}\cap g_{i, j}^{-1}\Gamma g_{i, j}}) \\
= \ & \mu_{\rmH^{\an}}{(\rmH^{\an}/\rmH^{\an}\cap (H(k)\cap \prod_{v\leq \infty_k} N_v)})= \frac{2^r (2\pi)^s}{w_k} R_k 
\end{aligned} \end{equation} by the product formula (see \cite[Chapter III, (1.3) Proposition]{N})  and (\ref{prod}) and the computation in \cite[P.140]{Vo}.

 By Lemma \ref{tran}, there is $g_i\in G(k)$ such that 
 \begin{equation} \label{transv-x} g_i v_0= x_i \ \ \ \text{ and } \ \ \ H_{x_i}=g_i H g_i^{-1} \end{equation} for $1\leq i\leq l$. By the same argument as (\ref{loc-max}) and (\ref{inner-tamagawa-y}), one obtains that 
\begin{equation} \label{inner-tamagawa-x}  \begin{aligned} & [(H(k) \cap  \prod_{v\leq \infty_k} N_v) : (H(k)\cap \prod_{v\leq \infty_k} ( H(k_v)\cap g_{i}^{-1}\Gamma_v g_{i}))]^{-1} \mu_{\rmH_{x_{i}}^{\an}}{(\rmH_{x_{i}}^{\an}/\rmH_{x_{i}}^{\an}\cap \Gamma}) \\
= \ & \mu_{\rmH^{\an}}{(\rmH^{\an}/\rmH^{\an}\cap (H(k)\cap \prod_{v\leq \infty_k} N_v)})= \frac{2^r (2\pi)^s}{w_k} R_k 
\end{aligned} \end{equation}
for $1\leq i\leq l$.

For each $1\leq j\leq h_{x_i}$, there is $$(\gamma_v)_{v\leq \infty_k}\in \prod_{v\leq \infty_k} \Gamma_v \ \ \ \text{such that} \ \ \  (\gamma_v)_{v\leq \infty_k} \cdot x_i = y_{i, j} $$
by the partition of $\mathcal X(\frak o_k)$. One can write 
$$ h_v = g_{i, j}^{-1} \gamma_v  g_i \in H (k_v) $$ for all $v\leq \infty_k$ by (\ref{transv-y}) and (\ref{transv-x}). Since $H(k_v)$ is commutative, one obtains that 
$$H(k_v)\cap g_{i, j}^{-1}\Gamma_v g_{i, j} = H(k_v)\cap h_v g_i^{-1} \gamma_v^{-1} \Gamma_v \gamma_v g_i h_v^{-1}=  H(k_v)\cap h_v g_i^{-1}  \Gamma_v  g_i h_v^{-1}= H(k_v)\cap  g_i^{-1} \Gamma_v  g_i $$ for all $v\leq \infty_k$. This implies that 
\begin{equation} \label{inner-form-equality} \mu_{\rmH_{y_{i, j}}^{\an}}{(\rmH_{y_{i, j}}^{\an}/\rmH_{y_{i, j}}^{\an}\cap \Gamma})=\mu_{\rmH_{x_{i}}^{\an}}{(\rmH_{x_{i}}^{\an}/\rmH_{x_{i}}^{\an}\cap \Gamma}) \end{equation}
for all $1\leq j\leq l_{x_i}$ by (\ref{inner-tamagawa-y}) and (\ref{inner-tamagawa-x}).

Since $$ B_T= B_{i, j}(T) \ \ \text{ for $1\leq j\leq h_{x_i}$ and $1\leq i\leq l$ } $$ by Lemma \ref{tran}, one concludes that 
\begin{equation} \label{count}  N({\mathcal X}, T) \sim \sum_{i=1}^l   \frac{h_{x_i} \cdot \mu_{\rmH_{x_i}^{\an}}{(\rmH_{x_i}^{\an}/\rmH_{x_i}^{\an}\cap \Gamma})}
     {\lambda_{\infty_k}{(G(k\otimes_{\Bbb Q} \bbR)/\Gamma)}}\cdot 
     \log{T}  \int_{B_{T}} \nu_{\infty_k}   \end{equation} as $T\to \infty$ by (\ref{inner-form-equality}).  
 Since $h_{x_i}$ is equal to 
$$ \begin{aligned} & [H_{x_i}({\bf A}_k): H_{x_i}(k) \prod_{v \leq \infty_k} ( H_{x_i}(k_v)\cap\Gamma_v )]) =  [H({\bf A}_k): H(k) \prod_{v \leq \infty_k} ( H(k_v)\cap g_i^{-1}\Gamma_v g_i)]) \\
= \  & h_k [H(k) \prod_{v \leq \infty_k} N_v : H(k) \prod_{v \leq \infty_k} ( H(k_v)\cap g_i^{-1}\Gamma_v g_i)] \\
= \ & h_k [(H(k) \cap  \prod_{v< \infty_k} N_v) : (H(k)\cap \prod_{v< \infty_k} ( H(k_v)\cap g_i^{-1}\Gamma_v g_i))]^{-1} \prod_{v < \infty_k} [N_v : ( N_v\cap g_i^{-1}\Gamma_v g_i)]
\end{aligned}$$
 for $1\leq i\leq l$ by (\ref{tama}) and (\ref{transv-x}), one obtains that 
\begin{equation}\label{key} h_{x_i} \cdot \mu_{\rmH_{x_i}^{\an}}{(\rmH_{x_i}^{\an}/\rmH_{x_i}^{\an}\cap \Gamma}) = \frac{2^r (2\pi)^s}{w_k} R_k h_k \prod_{v < \infty_k} [N_v : ( N_v\cap g_i^{-1}\Gamma_v g_i)] \end{equation}
for $1\leq i\leq l$ by (\ref{inner-tamagawa-x}). 
Since the Tamagawa number of $G$ is 1 by \cite[Chapter IV; Thm.4.4.1]{W}, one obtains
$$ 1= \lambda_G( G(k) \backslash G({\bf A}_{k})) = \lambda_G( G(k) \backslash (G(k) \cdot \prod_{v \leq \infty_k} \Gamma_v)) $$ by strong approximation for $G$ (see \cite[Theorem 7.12]{PR94}), where $\lambda_G=\prod_{v \leq \infty_k}\lambda_v$. This implies that 
$$  \lambda_{\infty_k} (G(k\otimes_{\Bbb Q} \Bbb R)/\Gamma)^{-1}= \prod_{v< \infty_k} \int_{\Gamma_v} \lambda_v  $$ by reduction theory (see \cite[Chapter 5, \S 5.2]{PR94}). 
Partitioning the set $\{x_i\}_{i=1}^l$ into the following disjoint union 
$$ \{x_i\}_{i=1}^l = \prod_{v\in (S\setminus \infty_k)} \{ x_{v, j}: \ 1\leq j\leq l_v \} \ \ \ \text{such that} \ \ \  \prod_{v\in (S\setminus \infty_k)} l_v= l \ \ \ \text{and} \ \ \ \mathcal X(\frak  o_{k_v})= \bigcup_{j=1}^{l_v}\Gamma_v x_{v, j} $$ for $v\in (S\setminus \infty_k)$ by Lemma \ref{loc} and writing $g_{v, j}$ for $g_i$ in (\ref{transv-x}) when we use $x_{v, j}$ for $x_i$, one obtains  
\begin{equation} \label{local-index}  \begin{aligned} & \int_{\mathcal X(\frak o_{k_v})} \nu_v= \sum_{j=1}^{l_v} ( \int_{\Gamma_v} \lambda_v)(\int_{H(k_v) \cap g_{v, j}^{-1}\Gamma_v g_{v, j}}\mu_v)^{-1} \\
  = \ & (\sum_{j=1}^{l_v} [N_v : N_v\cap g_{v, j}^{-1}\Gamma_v g_{v, j}] ) ( \int_{\Gamma_v} \lambda_v)(\int_{N_v}\mu_v)^{-1} 
  \end{aligned} \end{equation} for all primes $v\in (S\setminus \infty_k)$. For any primes $v\not\in S$, one simply has 
\begin{equation} \label{general}  \int_{\mathcal X(\frak o_{k_v})} \nu_v \cdot \int_{N_v}\mu_v=  \int_{\Gamma_v} \lambda_v \end{equation}  by (\ref{int-mod}).  Combining (\ref{count}), (\ref{key}), (\ref{local-index}) and (\ref{general}), one concludes that  
\begin{equation*}
    \begin{aligned}
& N({\mathcal X}, T) \sim  \frac{2^r (2\pi)^s}{w_k} R_k h_k \cdot  (\sum_{i=1}^l   \prod_{v < \infty_k} [N_v : ( N_v\cap g_i^{-1}\Gamma_v g_i)] \int_{\Gamma_v} \lambda_v ) \cdot  (\log{T}  \int_{B_{T}} \nu_{\infty_k})  \\
& = \frac{2^r (2\pi)^s}{w_k} R_k h_k \cdot ((\prod_{v\not\in S} \int_{\Gamma_v} \lambda_v)( \sum_{i=1}^l   \prod_{v \in (S \setminus \infty_k)} [N_v : ( N_v\cap g_i^{-1}\Gamma_v g_i)] \int_{\Gamma_v} \lambda_v )) \cdot  (\log{T}  \int_{B_{T}} \nu_{\infty_k}) \\
& = \frac{2^r (2\pi)^s}{w_k} R_k h_k \cdot ((\prod_{v\not\in S} \int_{\Gamma_v} \lambda_v) ( \prod_{v \in (S \setminus \infty_k)} \sum_{j=1}^{l_v}  [N_v : ( N_v\cap g_{v, j}^{-1}\Gamma_v g_{v, j})] \int_{\Gamma_v} \lambda_v )) \cdot  (\log{T}  \int_{B_{T}} \nu_{\infty_k}) \\
& = \frac{2^r (2\pi)^s}{w_k} R_k h_k \cdot ((\prod_{v\not\in S} \int_{\Gamma_v} \lambda_v) ( \prod_{v \in (S \setminus \infty_k)}  \int_{\mathcal X(\frak o_{k_v})} \nu_v \cdot \int_{N_v}\mu_v)) \cdot  (\log{T}  \int_{B_{T}} \nu_{\infty_k}) \\
& = \frac{2^r (2\pi)^s}{w_k} R_k h_k \cdot (\prod_{v< \infty_k}  \int_{\mathcal X(\frak o_{k_v})} \nu_v \cdot \int_{N_v}\mu_v) \cdot  (\log{T}  \int_{B_{T}} \nu_{\infty_k})
\end{aligned}
 \end{equation*}
  as $T\to \infty$.  Since 
  $$\int_{N_v}\mu_v = (1-q_v^{-1})[\frak o_{k_v} : D_{k_v}]^{-\frac{1}{2}} \ \ \ \text{and} \ \ \ \prod_{v<\infty_k} [\frak o_{k_v} : D_{k_v}] = |d_k| $$
by  \cite[Lemma 2.3.3]{Tate} and \cite[Chapter III, (2.9) Theorem and (2.11) Corollary]{N} where $D_{k_v}$ is the absolute different of $k_v$ for all $v<\infty_k$, one obtains the desired result.
\end{proof} 

If $k=\Bbb Q$, then $s=0$, $w_k=2$ and $r=d_k=R_k=h_k=1$ in Theorem.\ref{main}. One obtains Theorem \ref{intro-m} by \cite[(1.8.0) and Lemma 1.8.1]{BR95}.

\section{An example}

As an application of Theorem \ref{main}, we will give a precise asymptotic formula for counting the number of integral points of the following equation 
\begin{equation} \label{exam-equ} \mathcal X: \ \  \ x^2+y^2 - \delta z^2=1 \end{equation} with $\delta\in \Bbb Z$ and $\delta >0$, which has been studied in \cite{DRS}. We keep the same notations as those in the previous sections.

\begin{lem}\label{diff-form} Let $X= \mathcal X \times_{\Bbb Z} \Bbb Q$. Then  
$$ \omega_X = \frac{d x\wedge d y}{-2 \delta z} =  \frac{ dz\wedge dy}{-2x} =  \frac{dx \wedge dz}{-2y} $$
is a gauge form on $X$ such that the induced Tamagawa measure $\nu_p$ on $X(\Bbb Q_p)$ satisfies 
\begin{equation}\label{measure-density}
\int_{\mathcal X(\Bbb Z_p)} \nu_p= \lim_{k\to \infty} \frac{\#\{ (\beta_1, \beta_2, \beta_3)\in (\Bbb Z/(p^k))^3: \ f(\beta_1, \beta_2, \beta_3) \equiv 1 \mod p^k\}}{p^{2k}}\end{equation} for all primes $p$. 
\end{lem}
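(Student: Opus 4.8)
The plan is to establish the two assertions of the lemma separately: first, that the displayed three-fold expression defines a single $G$-invariant gauge form $\omega_X$ on $X$; second, that its associated $p$-adic Tamagawa measure $\nu_p$ computes the local density on the right-hand side of (\ref{measure-density}). I begin with the gauge form. Writing $F(x,y,z)=x^2+y^2-\delta z^2-1$ for the defining equation, differentiation gives $0 = 2x\,dx + 2y\,dy - 2\delta z\,dz$ on $X$. Using this relation to eliminate one differential at a time, one checks directly that on the open locus where the relevant coordinate is nonzero the three forms
$$ \frac{dx\wedge dy}{-2\delta z}, \qquad \frac{dz\wedge dy}{-2x}, \qquad \frac{dx\wedge dz}{-2y} $$
agree; for instance, from $2\delta z\,dz = 2x\,dx+2y\,dy$ one replaces $dz$ in the second form and recovers the first. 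Since these three loci cover $X$ (the gradient of $F$ never vanishes on $X$, as $X$ is smooth), the local pieces glue to a global nowhere-vanishing $2$-form, which is therefore a gauge form. That it is $G$-invariant follows because $X$ admits a unique $G$-invariant gauge form up to scalar (as recalled in the excerpt), and any gauge form arising from the invariant defining data inherits this invariance; I would verify invariance concretely by noting that the $\Spin(f)$-action preserves $F$ and hence the standard residue construction.

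The substance of the lemma is the second assertion, and here the key is Weil's interpretation of the Tamagawa measure on a hypersurface as a \emph{residue}. The form $\omega_X$ is precisely the Leray/Poincaré residue of $\tfrac{dx\wedge dy\wedge dz}{F}$ along $X=\{F=0\}$; equivalently it is characterized by $dF\wedge\omega_X = dx\wedge dy\wedge dz$ on $X$, which is exactly the meaning of the factor $-2\delta z$ (and its cyclic analogues) in the denominators. First I would record the standard fibration statement: for the measure $\nu_p$ induced by such a residue form, and with $\mu_p^{\mathrm{Leb}}$ the Haar measure on $\mathbb{Z}_p^3$ normalized so that $\mathbb{Z}_p^3$ has volume $1$,
$$ \int_{\mathcal X(\Bbb Z_p)} \nu_p = \lim_{k\to\infty} p^{-2k}\,\#\{\beta\in(\Bbb Z/p^k)^3 : F(\beta)\equiv 0 \bmod p^k\}. $$
This is the classical relation between a smooth hypersurface's gauge measure and its local point-counts (Weil, \cite[Chapter II]{W}); the exponent $2k$ reflects codimension $1$ in dimension $3$.

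The step I expect to be the main obstacle is pinning down the normalization so that no spurious power of $p$ or factor involving the different appears. Concretely I must confirm that the residue form $\omega_X$, built from the \emph{unnormalized} $dx\wedge dy\wedge dz$, produces exactly the counting density with denominator $p^{2k}$ and no additional constant. The cleanest route is to work locally: fix a point of $\mathcal X(\Bbb Z_p)$, pass to the chart where one coordinate is a unit (say $z\in\Bbb Z_p^\times$, using $\omega_X = \tfrac{dx\wedge dy}{-2\delta z}$), and use the implicit function theorem to parametrize $\mathcal X(\Bbb Z_p)$ locally by $(x,y)$. There the measure $\nu_p$ becomes Lebesgue measure $dx\,dy$ on the corresponding open subset of $\Bbb Z_p^2$, and the point-count over $\Bbb Z/p^k$ in that chart is precisely $p^{2k}$ times its volume in the limit; summing the disjoint charts covering $\mathcal X(\Bbb Z_p)$ and taking $k\to\infty$ yields (\ref{measure-density}). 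The care required is purely in matching the Jacobian from the implicit function theorem against the $-2\delta z$ factor so that these cancel cleanly; once that cancellation is verified the identity is immediate, and it holds uniformly for every prime $p$ including $p=2$ and $p\mid\delta$ since smoothness of $X$ guarantees the gradient is nonzero and the local parametrization exists at every integral point.
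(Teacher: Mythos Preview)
Your proposal is correct and, for the density formula (\ref{measure-density}), follows essentially the same route as the paper: both identify $\omega_X$ as the Leray residue characterized by $\omega_X\wedge dF = dx\wedge dy\wedge dz$ (the paper phrases this as $\omega_X\wedge\psi^*(dt)=dx\wedge dy\wedge dz$ for $\psi=f-1$) and then invoke the standard link between this residue measure and local solution-counts, which the paper cites as \cite[Lemma~1.8.1]{BR95} rather than unwinding the implicit-function argument you sketch.

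The one genuine difference is in how invariance of $\omega_X$ is established. You argue conceptually: since the $\Spin(f)$-action (factoring through $SO(f)$) preserves both $F$ and the ambient volume form $dx\wedge dy\wedge dz$, it must preserve the residue form. The paper instead gives a direct computation: for $A\in SO(f)$ it uses the identity between $A$ and its cofactor matrix coming from $A'JA=J$ (with $J=\diag(1,1,-\delta)$) to show explicitly that $dx'\wedge dy' = (z'/z)\,dx\wedge dy$, whence $\omega_X$ is $SO(f)$-invariant. Your residue argument is cleaner and more portable; the paper's computation is more hands-on and makes the invariance visible at the level of coordinates without appealing to the uniqueness of the residue. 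Either works, and neither requires the uniqueness-up-to-scalar statement you initially invoke (which by itself would be circular, as you seem to recognize).
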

\begin{proof}  The equality of $\omega_X$ follows from 
$$x \cdot dx + y\cdot dy -\delta z\cdot dz =0  $$ by differentiating the given equation (\ref{exam-equ}). 

Let $SO(f)$ be the special orthogonal group defined by $f=x^2+y^2-\delta z^2$. Since $X$ is also a homogeneous space of $SO(f)$ by \cite[\S 5.6]{CTX}, it is enough to show that $\omega_X$ is $SO(f)$-invariant by \cite[Theorem 2.4.1]{W}. Let 
$$ (x', y', z')=(x, y, z) \cdot A \ \ \ \text{with} \ \ \ A=  \left[\begin{array} [c]{llll}
a_{11} \ \  a_{12}  \ \ a_{13}  \\ a_{21} \ \ a_{22} \ \ a_{23} \\
a_{31} \ \ a_{32}  \ \ a_{33}
\end{array}
\right]\in SO(f) . $$
Since 
$$ A \cdot \left[\begin{array} [c]{llll}
1 \ \  & 0  \ \  & \ \ 0  \\  0 \ \ & 1 \ \  & \ \ 0 \\
0 \ \ & 0  \ \  & -\delta
\end{array}
\right]= \left[\begin{array} [c]{llll}
1 \ \  & 0  \ \  & \ \ 0  \\  0 \ \ & 1 \ \  & \ \ 0 \\
0 \ \ & 0  \ \  & -\delta
\end{array}
\right] \cdot (A')^{-1} ,$$ one obtains 
\begin{equation} \label{matrix-equ} \left[\begin{array} [c]{llll}
a_{11} \ \  & a_{12}  \ \  & -\delta \cdot a_{13}  \\ a_{21} \ \ & a_{22} \ \ & -\delta \cdot a_{23} \\
a_{31} \ \ & a_{32}  \ \ &-\delta \cdot a_{33}
\end{array}
\right] = \left[\begin{array} [c]{llll}
\ \ A_{11} \ \  & \ \ A_{12}  \ \ & \ \ A_{13}  \\  \ \ A_{21} \ \  & \ \ A_{22} \ \ & \ \ A_{23} \\
-\delta\cdot A_{31} \ \ & -\delta\cdot A_{32}  \ \ & -\delta\cdot A_{33}
\end{array}
\right] \end{equation} where $A_{ij}$ is the cofactor of $a_{ij}$ in $A$ for $1\leq i, j \leq 3$. Therefore
$$ dx'\wedge dy'=A_{33} \cdot dx\wedge dy +A_{13}\cdot  dy\wedge dz + A_{23} \cdot dz\wedge dx $$
$$ = (A_{33} - A_{13}  \frac{x}{\delta z} -A_{23} \frac{y}{\delta z})\cdot  dx\wedge dy = \frac{z'}{z} \cdot dx\wedge dy $$ by the equality of $\omega_X$ and (\ref{matrix-equ}). 

In order to show (\ref{measure-density}), one considers a morphism 
$$  \psi: \Spec(k[x, y, z]) \longrightarrow \Spec(k[t]) ;  \ (x, y, z) \mapsto f(x, y, z)-1 $$ induced by $f$. Since $\omega_X$ satisfies 
$$ \omega_X \wedge \psi^*(dt)=(\frac{d x\wedge d y}{-2 \delta z}) \wedge (2x \cdot dx + 2y\cdot dy -2\delta z\cdot dz)=dx\wedge dy\wedge dz,  $$ one concludes that (\ref{measure-density}) holds by \cite[Lemma 1.8.1]{BR95}. 
\end{proof}

The local densities of $f$ can be computed as follows.  

\begin{lem} \label{local-density} If $f(x, y, z)=x^2+y^2 - \delta z^2$ with $\delta\in \Bbb Z$ and $\delta >0$ and 
$$ \alpha_p (f, 1) =  \lim_{k\to \infty} \frac{\#\{ (\beta_1, \beta_2, \beta_3)\in (\Bbb Z/(p^k))^3: \ f(\beta_1, \beta_2, \beta_3) \equiv 1 \mod p^k\}}{p^{2k}} $$ for all primes $p$, then 
$$ \alpha_p (f, 1)= \begin{cases}  1+ (\frac{\delta}{p}) p^{-1}  \ \ \ & \text{$p$ is odd with $(p,\delta)=1$} \\
1-(\frac{-1}{p }) p^{-1} \ \ \ & \text{$p$ is odd with $p| \delta$} \\
1 \ \ \ & \text{$p=2$ with $4 \mid \delta$ or $\delta\equiv 1 \mod 8$} \\
3\cdot 2^{-2}  \ \ \ & \text{$p=2$ with $\delta\equiv 3, 7 \mod 8$}\\
2^{-1} \ \ \ & \text{$p=2$ with $2| \delta$ but $4 \nmid \delta$ or $\delta\equiv 5 \mod 8$}
\end{cases} $$
where $(\frac{*}{p})$ are the Legendre symbols.
\end{lem}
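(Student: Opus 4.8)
The plan is to evaluate the local densities directly from their definition as stabilized congruence counts, equivalently (by Lemma \ref{diff-form}) as the local integrals $\int_{\mathcal X(\Bbb Z_p)}\nu_p$, and to separate the three qualitatively different situations: $p$ odd with $p\nmid\delta$, $p$ odd with $p\mid\delta$, and $p=2$.

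For $p$ odd with $p\nmid\delta$, the reduction $f\bmod p$ is a nondegenerate ternary quadratic form and the target $a=1$ is a unit, so every solution of $f\equiv 1 \bmod p$ is a smooth point of the fibre (the gradient $2(x,y,-\delta z)$ cannot vanish there). Hensel's lemma then lifts each solution mod $p$ to exactly $p^{2(k-1)}$ solutions mod $p^k$, so that $\alpha_p(f,1)=N_1/p^2$ with $N_1=\#\{f\equiv 1 \bmod p\}$. I would compute $N_1$ from the standard count of $\Bbb F_p$-points of a nondegenerate ternary quadric: since $\det(f)=-\delta$, this count is $p^2+(\frac{\delta}{p})p$, giving $\alpha_p(f,1)=1+(\frac{\delta}{p})p^{-1}$.

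For $p$ odd with $p\mid\delta$ I would peel off the $z$-variable and write the count as $\sum_{z\bmod p^k}B_k(1+\delta z^2)$, where $B_k(m)=\#\{(x,y)\bmod p^k:\ x^2+y^2\equiv m\}$. Because $p\mid\delta$, every value $1+\delta z^2$ is a unit congruent to $1\bmod p$, so $x^2+y^2$ meets it in smooth points and Hensel yields $B_k(m)=(p-(\frac{-1}{p}))p^{k-1}$ independently of $z$; here $p-(\frac{-1}{p})$ is the number of $\Bbb F_p$-points of the nondegenerate conic $x^2+y^2=1$. Summing over the $p^k$ values of $z$ and dividing by $p^{2k}$ gives $\alpha_p(f,1)=1-(\frac{-1}{p})p^{-1}$, uniformly in the power of $p$ dividing $\delta$.

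The case $p=2$ is where the real work lies and is the main obstacle. Here the gradient $2(x,y,-\delta z)$ vanishes identically modulo $2$, so $\mathcal X$ has bad reduction, Hensel's lemma is unavailable, and one must determine the stabilized ratio $\#/2^{2k}$ by an explicit computation modulo $2^k$ for $k\geq 3$ (equivalently, integrate the gauge form of Lemma \ref{diff-form} over $\mathcal X(\Bbb Z_2)$). I would again separate $z$, reducing to the $2$-adic density $\beta(m)=\lim_{k\to\infty}B_k(m)/2^k$ of the anisotropic binary form $x^2+y^2$; this density is governed by $v_2(m)$ together with the class modulo $4$ of the odd part of $m$ (the form $x^2+y^2$ representing only elements whose odd part is $\equiv 1\bmod 4$). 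The computation then comes down to splitting the range of $z$ according to $v_2(z)$ and tracking $1+\delta z^2$ modulo a sufficiently high power of $2$; the outcome depends only on $v_2(\delta)$ and on $\delta\bmod 8$, producing the five listed sub-cases. The delicate points to watch are the correct normalization of the $2$-adic measure and the contribution of those $z$ for which $1+\delta z^2$ acquires positive $2$-adic valuation, which should be controlled by a dominated-convergence argument so that the limit and the averaging over $z$ may be interchanged.
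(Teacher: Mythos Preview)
Your argument for odd primes is complete and correct, and in fact more uniform than the paper's. The paper cites Siegel's Hilfssatz~12 directly for $p\nmid\delta$, and for $p\mid\delta$ it splits according to whether $(\frac{-1}{p})=1$ (passing to the $\Bbb Z_p$-equivalent form $xy-\delta z^2$ and counting) or $(\frac{-1}{p})=-1$ (a Hensel argument after a substitution). Your single fibration over $z$, reducing to the binary count $B_k(1+\delta z^2)$ for the nondegenerate form $x^2+y^2$, handles both sub-cases at once.

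At $p=2$ the two routes genuinely differ. The paper does not fibre over $z$; it invokes O'Meara's classification of $\Bbb Z_2$-lattices (\cite[93:17, 93:18]{OM}) to replace $f$ by an explicit $\Bbb Z_2$-equivalent form in each sub-case --- for odd $\delta$ one of $2xy+\delta z^2$ or $2(x^2+xy+y^2)-3\delta z^2$ according to the Hilbert symbol $(\frac{-1,-\delta}{\Bbb Q_2})$, and related improper forms when $2\mid\delta$ --- and then counts solutions of each canonical equation modulo $2^k$ by hand, using the local square theorem \cite[63:1]{OM} to decide when a given unit is a square. Your proposed route, computing the binary density $\beta(m)$ of $x^2+y^2$ over $\Bbb Z_2$ and integrating $\beta(1+\delta z^2)$ over $z\in\Bbb Z_2$, is equally valid and has the virtue of matching the odd-prime argument; the cost is that after tabulating $\beta(m)$ (which, as you say, depends only on $v_2(m)$ and the odd part modulo $4$) you must still split according to $\delta\bmod 8$ and $v_2(\delta)$ when evaluating the integral. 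Your caution about interchanging the limit with the average over $z$ is warranted precisely in the cases $\delta\equiv 3,7\bmod 8$, where $v_2(1+\delta z^2)$ is unbounded over odd $z$; since $\beta$ is uniformly bounded (by $2$) this is harmless. Either way the $p=2$ computation dissolves into a comparable handful of explicit counts; the paper's canonical forms make each individual count slightly cleaner, while your approach avoids importing the lattice classification.
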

\begin{proof} When $p$ is odd and $(p,\delta)=1$, then  $$\alpha_p (f, 1)=  1+ (\frac{\delta}{p}) p^{-1}$$ by \cite[Hilfssatz 12]{S}. 

\medskip
When $p$ is odd with $p| \delta$ and $(\frac{-1}{p})=1$, then $f$ is equivalent to $xy-\delta z^2$ over $\Bbb Z_p$. Since 
$$\#\{ (\beta_1, \beta_2, \beta_3)\in (\Bbb Z/(p^k))^3: \ \beta_1\beta_2-\delta\beta_3^2 \equiv 1 \mod p^k\}= (p^k-p^{k-1}) p^k $$ for sufficiently large $k$, one obtains that $\alpha_p (f, 1)=  1-p^{-1}$. 

When $p$ is odd with $p| \delta$ and $(\frac{-1}{p})=-1$, then $$\# \{ (\bar{\xi}, \bar{\eta})\in (\Bbb Z/p\Bbb Z)^2: \ \ \ \bar{\xi}^2+\bar{\eta}^2 =\bar{1} \mod p\} =p+1 $$ and one of $\xi$ and $\eta$ is not divisible by $p$. For such a pair $(\bar{\xi}, \bar{\eta})$, one can make a substitution $$\begin{cases} x=\xi+p x_1 \\ y=\eta+p y_1 \end{cases}$$ and obtain the equation 
\begin{equation} \label{exam-deform-equ} 2\xi x_1+ p x_1^2 + 2\eta y_1 + p y_1^2 -(\delta p^{-1}) z^2 = (1-\xi^2-\eta^2)p^{-1} \end{equation} over $\Bbb Z$ by (\ref{exam-equ}). Without loss of generality, we assume that $\xi$ is not divisible by $p$. By Hensel's Lemma, the equation (\ref{exam-deform-equ}) has a unique solution for $x_1$ over $\Bbb Z_p$ for $y_1$ and $z$ taking any values in $\Bbb Z$. This implies that
$$\#\{ (\beta_1, \beta_2, \beta_3)\in (\Bbb Z/(p^k))^3: \ f(\beta_1, \beta_2, \beta_3) \equiv 1 \mod p^k\}= (p+1) p^{k-1} p^k  $$ for sufficiently large $k$ and $\alpha_p (f, 1)=  1+p^{-1}$. 

\medskip

When $p=2$ and $(\delta, 2)=1$, then
\begin{equation}\label{canonical-form} f \cong \begin{cases} 2xy +\delta z^2 \ \ \ & \text{if $(\frac{-1, -\delta}{\Bbb Q_2})=-1$}  \\
 2(x^2+xy+y^2) -3\delta z^2  \ \ \ & \text{if $(\frac{-1, -\delta}{\Bbb Q_2})=1$} \end{cases} \end{equation} over $\Bbb Z_2$ by \cite[93:18. Example (iv) and 58:6]{OM}. 

$\bullet$ Case $\delta\equiv 1\mod 8$. Then $f\cong 2xy+z^2$ by (\ref{canonical-form}). Since an odd integer $m$ is a square in $\Bbb Z_2$ if and only if $m\equiv 1\mod 8$ by \cite[63:1. Local Square Theorem]{OM}, one concludes that 
$$\#\{ (\beta_1, \beta_2, \beta_3)\in (\Bbb Z/(2^k))^3: \ 2\beta_1 \beta_2+\beta_3^2 \equiv 1 \mod 2^k\}=2(2^{2k} - 2^{2(k-1)}-2\cdot 2^{k-2} \cdot 2^{k-1}) $$ for sufficiently large $k$ and $\alpha_2 (f, 1)= 1$.

$\bullet$ Case $\delta \equiv 3 \mod 8$.  Then $f\cong 2(x^2+xy+y^2)-z^2$ by (\ref{canonical-form}). Since $z$ and at least one of $x$ and $y$ take odd integers for the equation $2(x^2+xy+y^2)-z^2=1$, one can write $z^2=1+8t$. 

Suppose both $x$ and $y$ are odd. Write $x=1+2x_1$ and $y=1+2y_1$. Then
\begin{equation} \label{2}  \frac{1}{2}(3x_1+3y_1+1)+x_1^2+x_1y_1+y_1^2= t \end{equation}  and $x_1$ and $y_1$ have a different parity. Moreover, for any even (resp. odd) $x_1$, there is a unique odd (resp. even) $y_1\in \Bbb Z_2$ satisfying (\ref{2}) by Hensel's Lemma.  

Suppose $x$ is odd and $y$ is even. Write $x=1+2x_1$ and $y=2y_1$. Then
\begin{equation} \label{3} 2x_1(x_1+1) + y_1(1+2x_1) + 2y_1^2= 2t . \end{equation}
For any $x_1\in \Bbb Z_2$, there is a unique $y_1\in \Bbb Z_2$ satisfying (\ref{3}) by Hensel's Lemma. 

Summarizing the above arguments with symmetry of $x$ and $y$, one concludes that 
$$\#\{ (\beta_1, \beta_2, \beta_3)\in (\Bbb Z/(2^k))^3: \ 2(\beta_1^2+\beta_1 \beta_2+\beta_2^2)-\beta_3^2 \equiv 1 \mod 2^k\}=2^{k-1}(2\cdot 2^{k-2}+ 2\cdot 2^{k-1})$$
for sufficiently large $k$ and $\alpha_2 (f, 1)=3\cdot 2^{-2}$.

$\bullet$ Case $\delta \equiv 5 \mod 8$. Then $f\cong 2xy+5 z^2$ by (\ref{canonical-form}). By \cite[63:1 Local Square Theorem]{OM}, one concludes  
$$\#\{ (\beta_1, \beta_2, \beta_3)\in (\Bbb Z/(2^k))^3: \ 2\beta_1 \beta_2+5\beta_3^2 \equiv 1 \mod 2^k\}=2 \cdot (2\cdot 2^{k-1}\cdot 2^{k-2})$$
for sufficiently large $k$ and $\alpha_2 (f, 1)=  2^{-1}$. 

$\bullet$ Case $\delta \equiv 7 \mod 8$. Then $f\cong 2(x^2+xy+y^2)-5z^2$ by (\ref{canonical-form}). By \cite[63:1 Local Square Theorem]{OM}, the equation $2(x^2+xy+y^2)-5z^2=1$ is solvable over $\Bbb Z_2$ if and only if  
\begin{equation} \label{4} (x^2+xy+y^2-1)\in 2 \Bbb Z_2^\times . \end{equation}

Suppose both $x$ and $y$ are odd. Write $x=1+2x_1$ and $y=1+2y_1$. Then $(x_1+y_1)\in 2\Bbb Z_2$ by (\ref{4}). Suppose $x$ is even and write $x=2x_1$. Then both $x_1$ and  $y$ have to be odd by (\ref{4}). The similar result is true for even $y$. Therefore 
$$ \begin{aligned} &  \#\{ (\beta_1, \beta_2, \beta_3)\in (\Bbb Z/(2^k))^3: \ 2(\beta_1^2+\beta_1 \beta_2+\beta_2^2)-5\beta_3^2 \equiv 1 \mod 2^k\} \\
& =  2\cdot (2^{2(k-1)}-2\cdot 2^{2(k-2)} + 2\cdot 2^{k-2}\cdot 2^{k-1})= 3 \cdot 2^{2(k-1)} \end{aligned}$$
for sufficiently large $k$ and $\alpha_2 (f, 1)=3\cdot 2^{-2}$. 

\medskip

When $p=2$ and $2| \delta$ but $4 \nmid \delta$, then $f$ is equivalent to $$ (\Bbb Z_2 e_1 + \Bbb Z_2 e_2) \perp \Bbb Z_2 e_3 \ \ \ \text{with}  \ \ Q(e_1)=1,  B(e_1,e_2)=1, Q(e_2)=2 \ \ \text{and} \ Q(e_3)= \delta $$ by \cite[93:17. Example]{OM}. Since  $2| \delta$ but $4 \nmid \delta$, there is $\kappa\in \Bbb Z_2$ such that $$Q(e_2+e_3)=2+\delta= 4\kappa . $$ Since $\Bbb Z_2e_1 + \Bbb Z_2(e_2+e_3)$ splits $(\Bbb Z_2 e_1 + \Bbb Z_2 e_2) \perp \Bbb Z_2 e_3$ by \cite[82:15]{OM}, one obtains that 
$$ (\Bbb Z_2 e_1 + \Bbb Z_2 e_2) \perp \Bbb Z_2 e_3 = (\Bbb Z_2e_1 + \Bbb Z_2(e_2+e_3)) \perp \Bbb Z_2 e_3'  $$ with $Q(e_3')= (1-4\kappa)^{-1}\delta$ by comparing the discriminants of lattices. 
One concludes that 
    $$f\cong x^2+ 2xy +4\kappa y^2+ (1-4\kappa)^{-1}\delta z^2$$ over $\Bbb Z_2$.  Therefore 
$$ \#\{ (\beta_1, \beta_2, \beta_3)\in (\Bbb Z/(2^k))^3: \ \beta_1^2+2\beta_1 \beta_2+4\kappa \beta_2^2+(1-4\kappa)^{-1} \delta\beta_3^2 \equiv 1 \mod 2^k\} =2^{k-1} \cdot 2^k$$
for sufficiently large $k$ and $\alpha_2 (f, 1)= 2^{-1}$.

\medskip

When $p=2$ and $4 \mid \delta$, then $f\cong x^2+2xy+2y^2-\delta z^2$ over $\Bbb Z_2$ by \cite[93:17. Example]{OM}. Therefore 
$$ \#\{ (\beta_1, \beta_2, \beta_3)\in (\Bbb Z/(2^k))^3: \ \beta_1^2+2\beta_1 \beta_2+2 \beta_2^2- \delta\beta_3^2 \equiv 1 \mod 2^k\} =2\cdot 2^{k-1} \cdot 2^k$$
for sufficiently large $k$ and $\alpha_2 (f, 1)= 1$.
\end{proof}

The singular integral of $\mathcal X$ can be computed as follows.

\begin{lem} \label{infinite-part} If $\delta>0$ and 
$$ B_T= \{(x, y, z)\in \Bbb R^3:  \ x^2+y^2-\delta z^2 =1\ \text{with} \ \sqrt{x^2+y^2+z^2} \leq T \}  $$ for $T\geq 1$, then
$$ \int_{B_T} \frac{1}{2\delta z} dx dy = 2 \pi\sqrt{\frac{T^2-1}{1+\delta}} .$$
\end{lem}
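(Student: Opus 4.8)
The plan is to compute the surface integral by projecting the hyperboloid $X(\Bbb R)\colon x^2+y^2-\delta z^2=1$ orthogonally onto the $(x,y)$-plane and reducing to a one-dimensional radial integral. Here $\int_{B_T}\frac{1}{2\delta z}\,dx\,dy$ is understood as the integral of the measure $\nu_\infty=|\omega_X|$, which on the chart given by $\frac{dx\wedge dy}{-2\delta z}$ reads $\frac{dx\,dy}{2\delta|z|}$ since $\delta>0$. Because the defining equation involves only $z^2$, this projection is two-to-one: over each point of the annular region $\{x^2+y^2\geq 1\}$ lie the two points $z=\pm\sqrt{(x^2+y^2-1)/\delta}$, corresponding to the upper and lower halves of the (connected) hyperboloid. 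I would therefore first integrate over the half $z>0$ and then double the result to account for both sheets.

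First I would rewrite the truncation $x^2+y^2+z^2\leq T^2$ in terms of $\rho:=\sqrt{x^2+y^2}$. Substituting $z^2=(\rho^2-1)/\delta$ gives $\rho^2\frac{\delta+1}{\delta}\leq T^2+\frac{1}{\delta}$, hence $1\leq \rho^2\leq R^2$ with $R^2=\frac{\delta T^2+1}{\delta+1}$, so that $R^2-1=\frac{\delta(T^2-1)}{\delta+1}$. Passing to polar coordinates $dx\,dy=\rho\,d\rho\,d\theta$ on the half $z>0$, where $\frac{1}{2\delta z}=\frac{1}{2\sqrt\delta\,\sqrt{\rho^2-1}}$, the integral becomes
\[
\int_0^{2\pi}\!\!\int_1^R \frac{\rho\,d\rho\,d\theta}{2\sqrt{\delta}\,\sqrt{\rho^2-1}} = \frac{2\pi}{2\sqrt\delta}\Big[\sqrt{\rho^2-1}\Big]_1^R = \frac{\pi}{\sqrt\delta}\sqrt{R^2-1} = \pi\sqrt{\frac{T^2-1}{1+\delta}}.
\]
Doubling for the two sheets yields $2\pi\sqrt{\frac{T^2-1}{1+\delta}}$, as claimed.

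The only point requiring care is the waist circle $\{z=0,\ x^2+y^2=1\}$, where the $(x,y)$-projection degenerates and the integrand blows up. I would note that this causes no difficulty: the radial integrand $\frac{\rho}{\sqrt{\rho^2-1}}$ is integrable at $\rho=1$, so the improper integral converges. Moreover, the computation genuinely represents $\nu_\infty=|\omega_X|$ because, near the waist, the alternative expressions $\omega_X=\frac{dz\wedge dy}{-2x}=\frac{dx\wedge dz}{-2y}$ from Lemma \ref{diff-form} furnish nonsingular charts (valid where $x\neq 0$ or $y\neq 0$), confirming that $\frac{dx\,dy}{2\delta|z|}$ computes the measure correctly away from a set of measure zero. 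Thus no genuine obstacle arises; the substance of the argument is the elementary reduction to the radial integral and the careful bookkeeping of the two-to-one covering that produces the factor $2$.
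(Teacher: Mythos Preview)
Your argument is correct and follows essentially the same route as the paper: project to the $(x,y)$-plane, account for the two sheets $z=\pm\sqrt{(\rho^2-1)/\delta}$ by a factor of $2$, pass to polar coordinates, and evaluate the radial integral $\int_1^R \rho(\rho^2-1)^{-1/2}\,d\rho$ with $R^2=(\delta T^2+1)/(\delta+1)$. Your added remarks on the integrability at the waist and the interpretation of $\nu_\infty$ via the alternative charts of Lemma~\ref{diff-form} are sound and slightly more explicit than the paper's version, but the substance is the same.
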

\begin{proof} Since $z^2=\delta^{-1}(x^2+y^2-1)\geq 0$, one obtains that 
$$ \int_{B_T} \frac{1}{2\delta z} dx dy =  \int_{D_T} \frac{1}{\delta z} dx dy $$
where $D_T$ can be described as 
$$ \{ (x, y)\in \Bbb R^2: \ \sqrt{\frac{\delta}{1+\delta}} \leq \sqrt{x^2+y^2-(\delta+1)^{-1}} \leq \sqrt{\frac{\delta}{1+\delta}} \cdot T \} . $$
By using the polar coordinates $x=\rho \cos\theta$ and $y=\rho \sin \theta$, one obtains 
$$ \int_{B_T} \frac{1}{2\delta z} dx dy =\frac{2\pi}{\sqrt{\delta}} \int_{1}^{\sqrt{\frac{\delta T^2+1}{1+\delta}}} \frac{\rho}{\sqrt{\rho^2-1}} d\rho= 2\pi\sqrt{\frac{T^2-1}{1+\delta}} $$
as desired.
\end{proof}

Write $\delta= \delta_0\delta_1^2$ where both $\delta_0$ and $\delta_1$ are positive integers and $\delta_0$ is square free. Let
$$ S= \{ p \ \text{primes}: \ p\mid \delta_1 \ \text{and} \ (p, 2\delta_0)=1 \} . $$

\begin{exa} \label{example} 
Suppose that $f(x, y, z)=x^2+y^2 - \delta z^2$ and  
$$ N(f, 1, T)= \# \{(\alpha, \beta, \gamma)\in \Bbb Z^3:  \ f(\alpha, \beta, \gamma) =1\ \text{with} \ \sqrt{\alpha^2+\beta^2+\gamma^2} \leq T \}  $$ for $T>0$.

If $\delta_0>1$, then 
$$ N(f, 1, T)\sim  \frac{c}{\pi \sqrt{1+\delta}} \cdot  L(1, \chi)\cdot \frac{\prod_{p\in S} (1-(\frac{-1}{p})p^{-1})(1+(\frac{\delta_0}{p})p^{-1})^{-1}} {\prod_{p\mid \delta_0, \ p\neq 2} (1+(\frac{-1}{p})p^{-1})} \cdot T $$
as $T\to \infty$ where  
$$ c= \begin{cases}  8 \ \ \ & \text{$\delta_0 \equiv 1 \mod 8$}  \\
12 \ \ \ & \text{$\delta_0 \equiv 5 \mod 8$ and $2\nmid \delta$} \\
24 \ \ \ & \text{$\delta_0 \equiv 5 \mod 8$ and $2 \mid \delta$} \\
12 \ \ \ & \text{$\delta_0\equiv 3, 7 \mod 8$ and $2 \nmid \delta$} \\
16 \ \ \ & \text{$\delta_0\equiv 3, 7 \mod 8$ and $2\mid \delta$} \\
8 \ \ \ & \text{$2\mid \delta_0$ and $2\nmid \delta_1$} \\
16 \ \ \ & \text{$2\mid \delta_0$ and $2\mid \delta_1$}
\end{cases}$$ 
and $L(\chi, s)$ is the Dirichlet $L$-function associated to the non-trivial quadratic character $\chi$ of $\Gal(\Bbb Q(\sqrt{\delta_0})/\Bbb Q)$.

If $\delta_0=1$, then 
$$ N(f, 1, T)\sim  \frac{8}{\pi \sqrt{1+\delta}} \cdot ( \prod_{p\mid \delta,\  p\neq 2} \frac{p-(\frac{-1}{p})}{p+1} )\cdot  T \log T $$ as $T\to \infty$. 
\end{exa}

\begin{proof}
By \cite[102:7 and 104:5 Theorem]{OM}, the number $h(f)$ of classes in $gen(f)$ is given by the formula
$$ h(f)=[\Bbb G_m({\bf A}_{\Bbb Q}) : \theta(SO(f_{\Bbb Q}))\cdot (\theta (SO(f_{\Bbb R})\times \prod_{p \ \text{primes}} SO(f_{\Bbb Z_p}))) ]$$
where $\theta$ is the spinor norm map.  Since 
$$ \theta(SO(f_{\Bbb Q}))= \Bbb Q^\times,  \ \ \ \theta (SO(f_{\Bbb R})) =\Bbb R^\times  \ \ \ \text{and} \ \ \  \theta(SO(f_{\Bbb Z_p}))\supseteq \Bbb Z_p^\times $$
for odd primes $p$ by \cite[101:8, 55:2a and 92:5]{OM}, then $-1\in \theta(SO(f_{\Bbb Q}))$ implies that 
 $$(i_p)_{p\leq \infty_{\Bbb Q}}\in \theta(SO(f_{\Bbb Q}))\cdot (\theta (SO(f_{\Bbb R})\times \prod_{p \ \text{primes}} SO(f_{\Bbb Z_p})))$$ where 
$ i_p =  -1$ for $p=2$ and $i_p=1$ otherwise. Since $5$ is represented by $f$ over $\Bbb Z_2$, one has $5\in \theta(SO(f_{\Bbb Z_2}))$. Therefore 
$$\theta(SO(f_{\Bbb Q}))\cdot (\theta (SO(f_{\Bbb R})\times \prod_{p \ \text{primes}} SO(f_{\Bbb Z_p}))) \supseteq \Bbb Q^\times \cdot  (\Bbb R^\times \times (\prod_{p \ \text{primes}} \Bbb Z_p^\times ))$$ and $h(f)=1$. 

\medskip

If $\delta_0>1$,  then  
$$ N(f, 1, T) \sim (\prod_{p \ \text{primes}} \alpha_p(f, 1)) \cdot   \int_{B_T} \frac{1}{2\delta z} dx dy $$ as $T\to \infty$ by \cite[\S 5]{CX} and Lemma \ref{diff-form}, where $\alpha_p(f, 1)$ and $B_T$ are defined in Lemma \ref{local-density} and  Lemma \ref{infinite-part} respectively. 

$\bullet$ Case $\delta_0\equiv 1\mod 8$.  Then $2$ splits completely in $\Bbb Q(\sqrt{\delta_0})$ and
$$ L(\chi, 1)= 2 \cdot \prod_{p\nmid 2\delta_0} (1-(\frac{\delta_0}{p})p^{-1})^{-1} .$$
Therefore 
$$ \begin{aligned}&  N(f, 1, T) \sim  \prod_{p\mid \delta,\  p\neq 2} (1-(\frac{-1}{p})p^{-1}) \cdot \prod_{p\nmid 2\delta} (1+(\frac{\delta_0}{p})p^{-1}) \cdot  \frac{2\pi}{\sqrt{1+\delta}} T \\
= & \frac{2\pi}{\sqrt{1+\delta}} \cdot \frac{\prod_{p\mid \delta,\  p\neq 2} (1-(\frac{-1}{p})p^{-1})}{\prod_{p\in S} (1+(\frac{\delta_0}{p})p^{-1})}\cdot \prod_{p\nmid 2\delta_0} \frac{1-p^{-2}}{1-(\frac{\delta_0}{p})p^{-1}} \cdot T \\
= & \frac{2\pi}{\sqrt{1+\delta}} \cdot \frac{\prod_{p\mid \delta,\  p\neq  2} (1-(\frac{-1}{p})p^{-1})}{\prod_{p\in S} (1+(\frac{\delta_0}{p})p^{-1})}\cdot 2^{-1} L(1, \chi) \cdot \frac{4}{3}\prod_{p\mid \delta_0} (1-p^{-2})^{-1}\cdot \zeta(2)^{-1} \cdot T \\
= & \frac{8}{\pi \sqrt{1+\delta}} \cdot  L(1, \chi)\cdot \frac{\prod_{p\mid \delta,\  p\neq  2} (1-(\frac{-1}{p})p^{-1})}{\prod_{p\in S} (1+(\frac{\delta_0}{p})p^{-1})} \cdot \prod_{p\mid \delta_0} (1-p^{-2})^{-1} \cdot T \\
= & \frac{8}{\pi \sqrt{1+\delta}} \cdot  L(1, \chi)\cdot \frac{\prod_{p\in S} (1-(\frac{-1}{p})p^{-1})(1+(\frac{\delta_0}{p})p^{-1})^{-1}} {\prod_{p\mid \delta_0} (1+(\frac{-1}{p})p^{-1})} \cdot T 
\end{aligned}$$
as $T\to \infty$ by Lemma \ref{local-density} and  Lemma \ref{infinite-part} and $\zeta(2) = \frac{\pi^2}{6}$. 

$\bullet$ Case $\delta_0\equiv 5\mod 8$. Then $2$ is inert in $\Bbb Q(\sqrt{\delta_0})$ and 
$$ L(\chi, 1)= \frac{2}{3} \cdot \prod_{p\nmid 2\delta_0} (1-(\frac{\delta_0}{p})p^{-1})^{-1} .$$
Therefore 
$$ \begin{aligned} & N(f, 1, T) \sim \begin{cases}  \frac{1}{2}\cdot  \prod_{p\mid \delta} (1-(\frac{-1}{p})p^{-1}) \cdot \prod_{p\nmid 2\delta} (1+(\frac{\delta_0}{p})p^{-1}) \cdot  \frac{2\pi}{\sqrt{1+\delta}} T \ \ \ & 2\nmid \delta \\
\prod_{p\mid \delta,\  p \neq  2} (1-(\frac{-1}{p})p^{-1}) \cdot \prod_{p\nmid \delta} (1+(\frac{\delta_0}{p})p^{-1}) \cdot  \frac{2\pi}{\sqrt{1+\delta}} T \ \ \ & 2\mid \delta \end{cases} \\
\end{aligned}$$
as $T\to \infty$ by Lemma \ref{local-density} and  Lemma \ref{infinite-part} and the result follows.  

$\bullet$ Case $\delta_0\equiv 3, 7 \mod 8$. Then $2$ is ramified in $\Bbb Q(\sqrt{\delta_0})$ and
$$ L(\chi, 1)= \prod_{p\nmid 2\delta_0} (1-(\frac{\delta_0}{p})p^{-1})^{-1} .$$
Therefore 
$$ \begin{aligned} & N(f, 1, T) \sim \begin{cases} \frac{3}{4}\cdot  \prod_{p\mid \delta} (1-(\frac{-1}{p})p^{-1}) \cdot \prod_{p\nmid 2\delta} (1+(\frac{\delta_0}{p})p^{-1}) \cdot  \frac{2\pi}{\sqrt{1+\delta}} T  \ \ \ &   2\nmid \delta  \\
\prod_{p\mid \delta,\  p \neq 2} (1-(\frac{-1}{p})p^{-1}) \cdot \prod_{p\nmid \delta} (1+(\frac{\delta_0}{p})p^{-1}) \cdot  \frac{2\pi}{\sqrt{1+\delta}} T \ \ \ & 2\mid \delta \end{cases} 
\end{aligned}$$
as $T\to \infty$ by Lemma \ref{local-density} and  Lemma \ref{infinite-part} and the result follows.

$\bullet$ Case $2\mid \delta_0$. Then $2$ is ramified in $\Bbb Q(\sqrt{\delta_0})$ and
$$ L(\chi, 1)= \prod_{p\nmid \delta_0} (1-(\frac{\delta_0}{p})p^{-1})^{-1} .$$
Therefore 
$$ \begin{aligned} & N(f, 1, T) \sim \begin{cases} \frac{1}{2}\cdot  \prod_{p\mid \delta,\  p\neq 2} (1-(\frac{-1}{p})p^{-1}) \cdot \prod_{p\nmid \delta} (1+(\frac{\delta_0}{p})p^{-1}) \cdot  \frac{2\pi}{\sqrt{1+\delta}} T  \ \ \ &  2\nmid \delta_1 \\
\prod_{p\mid \delta,\  p\neq 2} (1-(\frac{-1}{p})p^{-1}) \cdot \prod_{p\nmid \delta} (1+(\frac{\delta_0}{p})p^{-1}) \cdot  \frac{2\pi}{\sqrt{1+\delta}} T \ \ \ & 2\mid \delta_1 \end{cases} \end{aligned}$$
as $T\to \infty$ by Lemma \ref{local-density} and  Lemma \ref{infinite-part} and the result follows.

\medskip

Otherwise $\delta_0=1$.  By Theorem \ref{main}, Lemma \ref{diff-form}, Lemma \ref{local-density} and  Lemma \ref{infinite-part}, one obtains 
$$ \begin{aligned} & N(f, 1, T) \sim (\prod_{p \ \text{primes}} (1-p^{-1}) \alpha_p(f, 1)) \cdot  \log T \int_{B_T} \frac{1}{2\delta z} dx dy \\
= & \frac{1}{2} \prod_{p\mid \delta,\  p\neq 2} (1-p^{-1}) (1-(\frac{-1}{p})p^{-1}) \cdot \prod_{p\nmid \delta,\  p\neq 2} (1-p^{-2})\cdot  \frac{2\pi}{\sqrt{1+\delta}} T \log T \\
= & \frac{1}{2} \cdot \frac{\prod_{p\mid \delta,\  p\neq 2} (1-p^{-1}) (1-(\frac{-1}{p})p^{-1})}{\prod_{p\mid \delta,\  p\neq 2} (1-p^{-2})} \cdot \frac{4}{3} \zeta(2)^{-1}  \cdot  \frac{2\pi}{\sqrt{1+\delta}} T \log T \\
= &  \frac{8}{\pi \sqrt{1+\delta}} \cdot ( \prod_{p\mid \delta,\  p\neq 2} \frac{1-(\frac{-1}{p})p^{-1}}{1+p^{-1}} )\cdot  T \log T  \end{aligned}$$ as $T\to \infty$. 
\end{proof}


\bigskip

\begin{bibdiv}

\begin{biblist}

\bib {BHC} {article} {
author={ A. Borel},
author={Harish-Chandra}, 
title={Arithmetic subgroups of algebraic groups},
journal= {Ann. of Math},
volume={75},
date= {1962},
Pages={485-535},
}

\bib {BR95} {article} {
   author={M.Borovoi},
   author={Z.Rudnick},
   title={Hardy-Littlewood varieties and semisimple groups},
   journal={Invent. Math.},
   volume={119},
   date={1995},
   number={},
   Pages={37-66},
}

\bib{CX}{article}{
author= {W.K. Chan}
author={F. Xu},
title={On representations of spinor genera}
journal={Compos. Math.}
volume={140}
date={2004}
pages={287-300}}

\bib{CTX} {article} {
    author={J-L.Colliot-Th\'el\`ene},
    author={F.Xu},
    title={Brauer-Manin obstruction for integral points of homogeneous spaces and
         representations by integral quadratic forms},
    journal={Compositio Math.},
    volume={145}
    date={2009},
    Pages={309-363},
}

\bib{DRS} {article} {
    author={W.Duke},
    author={Z.Rudnick},
    author={P.Sanark},
 title={Density of integer points on affine homogeneous varieties},
  journal={Duke Math. J.},
    volume={71},
      date={1993},
    pages={143-179},
    number={}
 }

 \bib{EM} {article} {
    author={A.Eskin},
    author={C.McMullen},
 title={Mixing, counting, and equidistribution in Lie groups},
  journal={Duke Math. J.},
    volume={71},
      date={1993},
    pages={181-209},
    number={}
 }

 \bib{Gi}{book}{
    author={J. Giraud},
     title={Cohomologie non ab\'elienne},
       volume={179},
     publisher={Springer-Verlag},
     place={},
      date={1971},
   journal={ },
    series={Grundlehren},
    number={ },
}

\bib{EMS} {article} {
    author={A.Eskin},
    author={S.Mozes},
    author={N.Shah},
 title={Unipotent flows and counting lattice points on homogeneous varieties},
  journal={Ann. of Math.},
    volume={143},
      date={1996},
    pages={253-199},
    number={}
 }


 \bib{Mar}{book}{
    author={G. A. Margulis},
     title={Discrete Subgroups of Semi-simple Lie Groups },
       volume={17},
     publisher={Springer-Verlag},
     place={},
      date={1991},
   journal={ },
    series={A series of modern surveys in mathematics},
    number={ },
}

\bib{N}{book}{
    author={ J.Neukirch},
    title={Algebraic Number Fields},
    volume={322},
    publisher={Springer},
    series={Grundlehren},
    date={1999},
}

\bib{OS} {article} {
    author={H. Oh},
    author={N. A. Shah},
 title={Limits of translates of divergent geodesics and integral points on one-sheeted hyperboloids},
  journal={Isr. J. Math. },
    volume={199},
      date={2014},
    pages={915-931},
    number={}
 }

\bib{OM}{book}{
author={  O.T. O'Meara },
title= {Introduction to Quadratic Forms},  
volume={117},
publisher={  Springer-Verlag },
series={Grundlehren },
date={ 1973 },
}

\bib{PR94}{book}{
    author={V. P. Platonov},
    author={A. S. Rapinchuk},
     title={Algebraic groups and  number theory},
     publisher={Academic Press},
     place={},
      date={1994},
    volume={ },
    number={ },
}


\bib {S}{article}{
author={ C.L.  Siegel},
title={ \"Uber die analytische Theorie
der quadratischen Formen. I},
 journal={ Ann. of Math. }
 volume={36}
 date= {1935}
pages={527-606}
} 

\bib{Tate}{article}{
author={J. T. Tate}, 
title= {Fourier analysis in number fields and Hecke's zeta-functions}
journal={Algebraic Number Theory, edited by Cassels and Fr\"olich}  
volume={}
date={1967}
pages={305-347}
}


\bib{Vo}{book}{
    author={V.E.Voskresenskii},
     title={Algebraic groups and their birational invariants},
       volume={179},
     publisher={AMS},
     place={},
      date={1998},
   journal={ },
    series={Translations of Mathematical Monographs},
    number={ },
}

 \bib {W}{book}{
    author={A. Weil},
     title={Adeles and Algebraic Groups},
     publisher={Birkh\"auser},
     place={Boston},
      journal={ },
      series={},
    volume={},
    date={1982},
   number={ },
     pages={},
 }

\bib{WX}{article}{
author={D. Wei},
author={F. Xu},
title={Counting integral points in certain homogeneous spaces}
journal={J. of Algebra},
volume={448}
date={2016}
pages={350-398}
}

\bib{Zha}{article}{
    AUTHOR = {Zhang, Runlin},
     TITLE = {Limiting distribution of translates of the orbit of a maximal
              {$\Bbb Q$}-torus from identity on {${\rm SL}_N(\Bbb R)/{\rm
              SL}_N(\Bbb Z)$}},
   JOURNAL = {Math. Ann.},
  FJOURNAL = {Mathematische Annalen},
    VOLUME = {375},
      YEAR = {2019},
    NUMBER = {3-4},
     PAGES = {1231--1281},
      ISSN = {0025-5831},
   MRCLASS = {37A17 (37A05 37A45)},
  MRNUMBER = {4023376},
MRREVIEWER = {Thomas Ward},
       DOI = {10.1007/s00208-019-01896-3},
       URL = {https://doi.org/10.1007/s00208-019-01896-3},
}

\end{biblist}
\end{bibdiv}

\end{document}